\theoremstyle{theorem}
\newtheorem{theorem}{Theorem}
\newtheorem{lemma}{Lemma}
\theoremstyle{definition}
\newtheorem{definition}{Definition}
\newtheorem{remark}{Remark}
\begin{document}
\title{\bf G\"odel's second incompleteness theorem for $\Sigma_n$-definable theories}
 \author{Conden Chao\footnote{Department of Philosophy, Peking University. My correspondence address is: Room 223 of Building 4th, Changchun New Garden, Peking University, P.~C.~100872, Peking, CHINA. Email: {\tt 476250089@qq.com}.}\\Payam Seraji\footnote{
 Faculty of Mathematical Sciences, University of Tabriz, 29 Bahman Boulevard, P.~O.~Box~5166617766, Tabriz, IRAN. Email: {\tt p$\_$seraji@tabrizu.ac.ir}.}}




\date{\today}



\maketitle
\begin{abstract}
 G\"odel's second incompleteness theorem is generalized by showing
that if the set of axioms of a theory $T\supseteq \textsf{PA}$ is $\Sigma_{n+1}$-definable and $T$ is $\Sigma_n$-sound, then $T$ dose not prove the sentence
$\Sigma_n\text{-}\textsf{Sound}(T)$ that expresses the $\Sigma_n$-soundness of $T$. The optimality of the generalization is shown
by presenting a  $\Sigma_{n+1}$-definable (indeed a complete
$\Delta_{n+1}$-definable) and $\Sigma_{n-1}$-sound theory $T$ such that $\textsf{PA} \subseteq T$ and $\Sigma_{n-1}\text{-}\textsf{Sound}(T)$ is provable in $T$. It is also proved that no recursively enumerable and $\Sigma_1$-sound theory of arithmetic, even very weak theories which do not contain  Robinson's Arithmetic, can prove its own $\Sigma_1\text{-}{\rm soundness}$.
\par\vbox{} Keywords: G\"odel's second incompleteness, $\Sigma_n$-definable, $\Sigma_n$-sound, $\Sigma_n\textsf{-Sound}$, strong provability predicate
\end{abstract}

\section{Introduction}
G\"odel's second  incompleteness theorem states that for any recursively enumerable and sufficiently strong  (say any extension of Peano's Arithmetic $\textsf{PA}$) theory $T$, $T\not\vdash\textsf{Con}(T)$, where $\textsf{Con}(T)$ is the arithmetical sentence expressing the the consistency of $T$ (see e.g. \cite{Tpol2003,Mfoa1998,Acml2006,Aigt2013}). This consistency statement is usually built from a ``provability predicate'' such as $\textsf{Con}(T)=_{df}\neg\textsf{Pr}_T(\ulcorner\bot\urcorner)$, where $\textsf{Pr}_T$ satisfies the 
derivability conditions: 
\begin{enumerate}
\item[$D_1$:] if $T\vdash\phi$ then $T\vdash \textsf{Pr}_T(\ulcorner \phi \urcorner)$,
\item[$D_2$:] $T\vdash\textsf{Pr}_T(\ulcorner\phi \rightarrow \psi\urcorner)\rightarrow[\textsf{Pr}_T(\ulcorner \phi \urcorner) \rightarrow\textsf{Pr}_T(\ulcorner \psi \urcorner)]$, and
\item[$D_3$:] $T\vdash\textsf{Pr}_T(\ulcorner \phi \urcorner) \rightarrow\textsf{Pr}_T(\ulcorner\textsf{Pr}_T(\ulcorner \phi \urcorner) \urcorner)$.
\end{enumerate}
A natural question that comes to mind is that what happens to G\"odel's second  incompleteness theorem for non-recursively enumerable theories? For considering the phenomenon of G\"odel's second  incompleteness theorem for general (not necessarily recursively enumerable) theories $T$, we should be able to write down $\textsf{Con}(T)$ or equivalently $\textsf{Pr}_T$ (noting that $\textsf{PA}\vdash\textsf{Pr}_T(\ulcorner\phi\urcorner)\leftrightarrow\neg \textsf{Con}(T+\neg\phi)$ for any formula $\phi$); thus we can only consider  definable theories. Let us note that if the provability predicate of a definable theory $T$ satisfies the derivability conditions then it can be shown that $T$ cannot prove its consistency by the usual argument.
\par But if $\textsf{Pr}_T$ does not satisfy the derivability conditions, then G\"odel's second  incompleteness theorem may not hold anymore; see \cite[pp. 263--264]{Eldt1975} for an example of a consistent $\Delta_2$-definable extension of  $\textsf{PA}$ which proves its own (standard) consistency statement (Section~5 of \cite{Elmc2012} contains a more modern treatment).
So, the derivability conditions may not hold for definable theories in general, even if they are sufficiently strong, e.g. contain $\textsf{PA}$, for the reason that $\textsf{Pr}_T$ is not then necessarily a $\Sigma_1$-formula,  so $D_1$ or $D_3$ above may not hold anymore.
One of the earliest instances of G\"odel's second  incompleteness theorem for non-recursively enumerable (but definable) theories is Jeroslow's Theorem~6 in \cite[p.~264]{Eldt1975} stating (in an equivalent rewording) that $\Delta_2$-definable extensions of $\textsf{PA}$  cannot prove their own  $\Sigma_2$-soundness, provided that they are $\Sigma_1$-sound (cf. \cite{Nsgt2011} for the equivalence of definitions) and satisfy some further technical conditions.
\par A theory is called {\em $\Sigma_n$-sound} if it cannot prove a false $\Sigma_n$-sentence.
For any theory $T$, and any $n\in\mathbb{N}$, the $\Sigma_n$-soundness of $T$ is equivalent to its consistency with $\Pi_n\text{-}{\rm Th}(\mathbb{N})$, the set of all true $\Pi_n$-sentences (see \cite{Mopa1991} for the notation). For simplicity we will write $\Sigma_n\text{-}\textsf{Sound}(T)$ instead of $\textsf{Con}(T\cup\Pi_n\text{-}{\rm Th}(\mathbb{N}))$.
Here we will show that G\"odel's second  incompleteness theorem holds for $\Sigma_{n+1}$-definable and $\Sigma_n$-sound theories, in the sense that if $T$ is a $\Sigma_{n+1}$-definable and $\Sigma_n$-sound theory containing $\textsf{PA}$, then $T$ dose not prove $\Sigma_n\text{-}\textsf{Sound}(T)$ (Theorem~\ref{CS009} below). This result is a bit stronger than a version which follows quickly from the well-known facts about, the so called, \textit{strong provabilty predicates} (Theorem~\ref{CS007} below).
 We will also show the optimality of this result by presenting  a $\Sigma_{n+1}$-definable (indeed complete  $\Delta_{n+1}$-definable) $\Sigma_{n-1}$-sound extension of $\textsf{PA}$ which proves its own $\Sigma_{n-1}$-soundness.


\section{Generalized G\"odel's second incompleteness theorem}
A theory $T$ is {\em definable} when there exists a formula $\textsf{Axiom}_T(x)$ such that for every natural number $n$, $\textsf{Axiom}_T(n)$ holds just in case $n$ is the G\"odel number of an axiom of $T$. The formula $\textsf{ConjAx}_T(x)$ indicates that $x$ is the G\"odel number of a formula which is the conjunction of some axioms of $T$. Let $\textsf{Proof}(y,x)$ be the proof relation in first-order logic, saying that $y$ is the G\"odel code of a proof of a formula with G\"odel number $x$. Thus, the consistency of a definable theory $T$,  i.e., $\textsf{Con}(T)$, can be written as $$\forall s,u\big[\textsf{ConjAx}_T(s)\rightarrow\neg\textsf{Proof}(u,\ulcorner s\rightarrow\bot\urcorner)\big].$$ So, we can write $\Sigma_n\text{-}\textsf{Sound}(T)=_{df}\textsf{Con}(T\cup\Pi_n\text{-}{\rm Th}(\mathbb{N}))$ as $$\forall s,t,u\big[\textsf{ConjAx}_T(s)\wedge\Pi_n\text{-}\textsf{True}(t)\rightarrow\neg\textsf{Proof}(u,\ulcorner s\wedge t\rightarrow\bot\urcorner)\big],$$
where the formula $\Pi_n\text{-}\textsf{True}(x)$ defines the set $\Pi_n\text{-}{\rm Th}(\mathbb{N})$. We call a theory $T$ an \textit{extension} of $\textsf{PA}$ if  $\mathbb{N}\vDash \forall x \big[\textsf{Axiom}_{\sf PA}(x)\rightarrow \textsf{Axiom}_T(x)\big]$, where $\textsf{Axiom}_{\sf PA}$ is a $\Delta_0$-formula defining the set of axioms of ${\sf PA}$. $T$ is an  \textit{explicit} (or \textit{provable}) extension of ${\sf PA}$ when we have ${\sf PA}\vdash \forall x \big[\textsf{Axiom}_{\sf PA}(x)\rightarrow \textsf{Axiom}_T(x)\big]$.
\par For each $n\in \mathbb{N}$, let ${\tt Pr}^{(n+1)}(x)$ be the provability predicate of theory $T=\textsf{PA}+\Pi_n\text{-}{\rm Th}(\mathbb{N})$. The predicate ${\tt Pr}^{(n+1)}(x)$ is an example of a \textit{strong provability predicate of degree $n+1$} (cf. Definition~2.1 of \cite{Ospa1993})  which means it satisfies the following conditions:
\begin{enumerate}
\item[$C_1$:] ${\tt Pr}^{(n+1)}(x)\in\Sigma_{n+1}$;
\item[$C_2$:] ${\sf PA}\vdash{\tt Pr}^{(n+1)}(\ulcorner \phi\rightarrow \psi\urcorner)\rightarrow\big[{\tt Pr}^{(n+1)}(\ulcorner \phi\urcorner)\rightarrow{\tt Pr}^{(n+1)}(\ulcorner \psi\urcorner)\big]$ for every $\phi,\psi$;
\item[$C_3$:] ${\sf PA}\vdash \phi\rightarrow{\tt Pr}^{(n+1)}(\ulcorner \phi\urcorner)$ for every $\phi\in\Sigma_{n+1}$;
\item[$C_4$:] if $\mathbb{N}\models{\tt Pr}^{(n+1)}(\ulcorner \phi\urcorner)$ then $\mathbb{N}\models \phi$ for every $\phi\in\Sigma_{n+1}$;
\item[$C_5$:] if ${\sf PA}\vdash \phi$ then ${\sf PA}\vdash{\tt Pr}^{(n+1)}(\ulcorner \phi\urcorner)$ for every $\phi$.
\end{enumerate}
 Using these properties, it can be proved that ${\tt Pr}^{(n+1)}(x)$ satisfies the L\"ob axiom (Theorem~2.2 of \cite{Ospa1993}), which is $\textsf{PA}\vdash {\tt Pr}^{(n+1)}(\ulcorner {\tt Pr}^{(n+1)}(\ulcorner \phi\urcorner)\rightarrow \phi\urcorner)\rightarrow {\tt Pr}^{(n+1)}(\ulcorner\phi\urcorner)$ (for any sentence $\phi$), and it immediately implies that

\begin{theorem}\label{CS001}
  For each $n\in \mathbb{N}$, the theory $T=\textsf{PA}+\Pi_n\text{-}{\rm Th}(\mathbb{N})$ can not prove its own consistency.
\end{theorem}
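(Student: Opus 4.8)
The plan is to reproduce, in this setting, the textbook derivation of G\"odel's second incompleteness theorem from L\"ob's theorem, with the strong provability predicate ${\tt Pr}^{(n+1)}$ of $T=\textsf{PA}+\Pi_n\text{-}{\rm Th}(\mathbb{N})$ playing the role of the usual $\Sigma_1$ provability predicate. First I would record two facts. (i) By the way $\textsf{Con}(T)$ is written and by ${\tt Pr}^{(n+1)}$ being the provability predicate of $T$, the sentence $\textsf{Con}(T)$ is $\textsf{PA}$-provably equivalent to $\neg{\tt Pr}^{(n+1)}(\ulcorner\bot\urcorner)$, i.e. to ${\tt Pr}^{(n+1)}(\ulcorner\bot\urcorner)\to\bot$. (ii) The predicate ${\tt Pr}^{(n+1)}$ satisfies the first derivability condition \emph{over $T$ itself}: if $T\vdash\phi$ then $T\vdash{\tt Pr}^{(n+1)}(\ulcorner\phi\urcorner)$. (Condition $C_5$ supplies this only over $\textsf{PA}$.) For (ii): if $T\vdash\phi$ then ${\tt Pr}^{(n+1)}(\ulcorner\phi\urcorner)$ is a true sentence of $\mathbb{N}$, and by $C_1$ it is $\Sigma_{n+1}$; but $T$ is $\Sigma_{n+1}$-complete, since a true $\Sigma_{n+1}$-sentence $\exists x\,\theta(x)$ (with $\theta\in\Pi_n$, and after prenexing in general) has a witnessing numeral $\bar m$ for which $\theta(\bar m)$ is a true $\Pi_n$-sentence, hence an axiom of $T$ --- so $T\vdash{\tt Pr}^{(n+1)}(\ulcorner\phi\urcorner)$. (This can also be extracted from $C_2$, $C_3$, $C_5$ together with compactness.)

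Given (i) and (ii), suppose toward a contradiction that $T\vdash\textsf{Con}(T)$, so $T\vdash{\tt Pr}^{(n+1)}(\ulcorner\bot\urcorner)\to\bot$. Applying (ii) to this sentence gives $T\vdash{\tt Pr}^{(n+1)}\big(\ulcorner\,{\tt Pr}^{(n+1)}(\ulcorner\bot\urcorner)\to\bot\,\urcorner\big)$. The L\"ob axiom instantiated at $\phi=\bot$ is a theorem of $\textsf{PA}$, hence of $T$, namely $T\vdash{\tt Pr}^{(n+1)}\big(\ulcorner\,{\tt Pr}^{(n+1)}(\ulcorner\bot\urcorner)\to\bot\,\urcorner\big)\to{\tt Pr}^{(n+1)}(\ulcorner\bot\urcorner)$, so modus ponens yields $T\vdash{\tt Pr}^{(n+1)}(\ulcorner\bot\urcorner)$. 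Combined with $T\vdash{\tt Pr}^{(n+1)}(\ulcorner\bot\urcorner)\to\bot$ this gives $T\vdash\bot$. But $T=\textsf{PA}+\Pi_n\text{-}{\rm Th}(\mathbb{N})$ is consistent, because every one of its axioms is true in $\mathbb{N}$ ($\textsf{PA}$ is sound and each member of $\Pi_n\text{-}{\rm Th}(\mathbb{N})$ is true by definition), whence $\mathbb{N}\models T$. Contradiction; therefore $T\not\vdash\textsf{Con}(T)$.

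I expect the only step needing real attention to be (ii): since ${\tt Pr}^{(n+1)}$ is $\Sigma_{n+1}$ and $\textsf{PA}$ is not $\Sigma_{n+1}$-complete, one cannot simply quote $C_5$ to internalize a $T$-theorem as a $T$-provable box, and one must lean on the fact that $T$ itself has all true $\Pi_n$-sentences (equivalently, all true $\Sigma_{n+1}$-sentences) among its theorems. Everything after that is the familiar derivation of G\"odel's second incompleteness theorem from L\"ob's theorem, with $\Box\phi$ read as ${\tt Pr}^{(n+1)}(\ulcorner\phi\urcorner)$, and is indeed immediate.
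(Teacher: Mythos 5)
Your proof is correct and follows the paper's route: the paper's entire proof is ``let $\phi=\bot$ in the L\"ob axiom,'' and you have simply spelled out the standard deduction from that instance, including the one point the paper leaves tacit, namely that the first derivability condition holds over $T$ itself (via $\Sigma_{n+1}$-completeness of $T$, since $C_5$ is only stated over $\textsf{PA}$) so that the assumption $T\vdash\textsf{Con}(T)$ can be internalized and contradicted with the consistency of $T$, which holds because $\mathbb{N}\models T$.
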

\begin{proof}
  Let $\phi=\bot$ in the L\"ob's axiom.
\end{proof}
This result can be a bit generalized by the following observation. Let $T\supseteq \textsf{PA}$ to be a $\Sigma_{n+1}$-definable theory which does not necessarily contain all $\Pi_n\text{-}{\rm Th}(\mathbb{N})$, but it is $\Sigma_n$-sound and $T$ is also an explicit extension of $\textsf{PA}$. The $\Sigma_n$-soundness of $T$ implies that the theory $T^*=T+\Pi_n\text{-}{\rm Th}(\mathbb{N})$ is consistent. Let ${\tt Pr}_{T^*}$ to be the provability predicate of $T^*$. It can be easily checked that ${\tt Pr}_{T^*}$ satisfies the properties $C_1$, $C_2$ and $C_5$ for a strong provability predicate of degree $n+1$. By Proposition~2.11 of~\cite{Rppa2005}  for every $\sigma\in\Sigma_{n+1}$ we have
$${\sf PA}\vdash\sigma\rightarrow\exists s,t,u\big[\textsf{ConjAx}_{\sf PA}(s)\wedge\Pi_n\text{-}\textsf{True}(t)\wedge\textsf{Proof}(u,\ulcorner s\wedge t\rightarrow\sigma\urcorner)\big].$$
Thus
\[{\sf PA}\vdash\sigma\rightarrow\exists s,t,u\big[\textsf{ConjAx}_{T}(s)\wedge\Pi_n\text{-}\textsf{True}(t)\wedge\textsf{Proof}(u,\ulcorner s\wedge t\rightarrow\sigma\urcorner)\big]\]
 (because ${\sf PA}\vdash\forall x\big[\textsf{Axiom}_{\sf PA}(x)\rightarrow\textsf{Axiom}_{T}(x)\big]$). Hence
${\sf PA}\vdash\sigma\rightarrow {\tt Pr}_{T^*}(\ulcorner \sigma \urcorner)$  for any $\sigma\in\Sigma_{n+1}$, so ${\tt Pr}_{T^*}$ also satisfies the property $C_3$ for a strong provability predicate of degree $n+1$. A close inspection of Theorem~2.2 of  \cite{Ospa1993} (which proves the L\"ob's axiom) reveals that the property $C_4$ is not used in its proof, so  the predicate ${\tt Pr}_{T^*}$ also satisfies the L\"ob's axiom which is the formalized G\"odel's second  incompleteness theorem. So $T^*\nvdash \textsf{Con}(T^*)$. But $\textsf{Con}(T^*)$ is exactly $\Sigma_n\text{-}\textsf{Sound}(T)$, hence
 $\Sigma_n\text{-}\textsf{Sound}(T)$ is not provable in $T^*$ and then in $T$. So we have proved the following result which is a generalization of G\"odel's second  incompleteness theorem for definable theories, noting that for extensions of ${\sf PA}$,  $\Sigma_0$-soundness is equivalent to consistency (Theorem~5 of \cite{Nsgt2011}); thus G\"odel's second theorem is the following theorem for $n=0$.
\begin{theorem}
  \label{CS007}
For any $\Sigma_{n+1}$-definable and $\Sigma_n$-sound theory $T$ which is an explicit extension of $\textsf{PA}$, i.e. $\textsf{PA}\vdash \forall x [\textsf{Axiom}_{\textsf{PA}}(x)\rightarrow \textsf{Axiom}_{T}(x)]$, we have $T\not\vdash\Sigma_n\text{-}\textsf{Sound}(T)$.
\end{theorem}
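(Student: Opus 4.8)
The plan is to route the statement through G\"odel's second incompleteness theorem for the auxiliary theory $T^{*}=T+\Pi_n\text{-}{\rm Th}(\mathbb{N})$, as already previewed in the discussion above. The guiding observation is that $\Sigma_n\text{-}\textsf{Sound}(T)$ \emph{is} the sentence $\textsf{Con}(T^{*})$, written with the provability predicate
\[
{\tt Pr}_{T^{*}}(\ulcorner\phi\urcorner)\ :=\ \exists s,t,u\big[\textsf{ConjAx}_{T}(s)\wedge\Pi_n\text{-}\textsf{True}(t)\wedge\textsf{Proof}(u,\ulcorner s\wedge t\rightarrow\phi\urcorner)\big];
\]
so it suffices to show $T^{*}\nvdash\textsf{Con}(T^{*})$ and then invoke $T\subseteq T^{*}$.

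First I would check that ${\tt Pr}_{T^{*}}$ is a strong provability predicate of degree $n+1$ in the sense of $C_1$--$C_5$, except possibly $C_4$. Because $T$ is $\Sigma_{n+1}$-definable we may take $\textsf{Axiom}_T$ (hence $\textsf{ConjAx}_T$) in $\Sigma_{n+1}$; since $\Pi_n\text{-}\textsf{True}\in\Pi_n\subseteq\Sigma_{n+1}$ and $\textsf{Proof}$ is recursive, a routine complexity count yields ${\tt Pr}_{T^{*}}\in\Sigma_{n+1}$, which is $C_1$. Condition $C_2$ is the formalized closure of provability under modus ponens, available in $\textsf{PA}$ with no assumption on the complexity of ${\tt Pr}_{T^{*}}$. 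For $C_5$ one converts a fixed $\textsf{PA}$-proof of $\phi$, provably in $\textsf{PA}$, into witnesses $s,t,u$; here the hypothesis that $T$ is an \emph{explicit} extension of $\textsf{PA}$ is what lets one replace $\textsf{Axiom}_{\textsf{PA}}$ by $\textsf{Axiom}_T$, so that the $\textsf{PA}$-axioms used in that proof genuinely count as $T$-axioms inside $\textsf{ConjAx}_T$.

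The one step needing genuine work is $C_3$, namely $\textsf{PA}\vdash\sigma\rightarrow{\tt Pr}_{T^{*}}(\ulcorner\sigma\urcorner)$ for every $\sigma\in\Sigma_{n+1}$. I would derive it from the provable $\Sigma_{n+1}$-completeness of $\textsf{PA}+\Pi_n\text{-}{\rm Th}(\mathbb{N})$ over $\textsf{PA}$: Proposition~2.11 of~\cite{Rppa2005} gives
\[
\textsf{PA}\vdash\sigma\rightarrow\exists s,t,u\big[\textsf{ConjAx}_{\textsf{PA}}(s)\wedge\Pi_n\text{-}\textsf{True}(t)\wedge\textsf{Proof}(u,\ulcorner s\wedge t\rightarrow\sigma\urcorner)\big],
\]
and then, once more using $\textsf{PA}\vdash\forall x\,[\textsf{Axiom}_{\textsf{PA}}(x)\rightarrow\textsf{Axiom}_T(x)]$, one replaces $\textsf{ConjAx}_{\textsf{PA}}$ by $\textsf{ConjAx}_T$ inside $\textsf{PA}$. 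I expect this --- citing the provable $\Sigma_{n+1}$-completeness correctly and carrying the explicitness hypothesis through the coding --- to be the only non-bookkeeping part of the argument, since it is precisely what compensates for ${\tt Pr}_{T^{*}}$ not being $\Sigma_1$.

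Finally, with $C_1,C_2,C_3,C_5$ in hand, I would quote Theorem~2.2 of~\cite{Ospa1993}, observing (as the discussion above does) that its proof of the L\"ob axiom
\[
\textsf{PA}\vdash{\tt Pr}_{T^{*}}\big(\ulcorner{\tt Pr}_{T^{*}}(\ulcorner\phi\urcorner)\rightarrow\phi\urcorner\big)\rightarrow{\tt Pr}_{T^{*}}(\ulcorner\phi\urcorner)
\]
never uses $C_4$. Specializing to $\phi=\bot$ and using that $\textsf{PA}$ proves ${\tt Pr}_{T^{*}}$ to respect logical equivalence of its argument (a consequence of $C_2$ and $C_5$), together with the fact that ${\tt Pr}_{T^{*}}(\ulcorner\bot\urcorner)\rightarrow\bot$ is logically equivalent to $\textsf{Con}(T^{*})$, this yields $\textsf{PA}\vdash{\tt Pr}_{T^{*}}(\ulcorner\textsf{Con}(T^{*})\urcorner)\rightarrow\neg\textsf{Con}(T^{*})$. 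Since $\Sigma_n$-soundness of $T$ makes $T^{*}$ consistent, $\mathbb{N}\models\textsf{Con}(T^{*})$, whence $\mathbb{N}\models\neg{\tt Pr}_{T^{*}}(\ulcorner\textsf{Con}(T^{*})\urcorner)$; as any genuine $T^{*}$-proof would supply standard witnesses $s,t,u$, this forces $T^{*}\nvdash\textsf{Con}(T^{*})$. Because $\textsf{Con}(T^{*})$ is exactly $\Sigma_n\text{-}\textsf{Sound}(T)$ and $T\subseteq T^{*}$, we conclude $T\nvdash\Sigma_n\text{-}\textsf{Sound}(T)$, as desired.
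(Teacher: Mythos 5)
Your proposal is correct and follows essentially the same route as the paper: form $T^{*}=T+\Pi_n\text{-}{\rm Th}(\mathbb{N})$, verify $C_1$, $C_2$, $C_5$ directly, obtain $C_3$ from Proposition~2.11 of Beklemishev together with the explicit-extension hypothesis, invoke Ignatiev's proof of the L\"ob axiom after noting that $C_4$ is never used, and conclude $T^{*}\nvdash\textsf{Con}(T^{*})$, hence $T\nvdash\Sigma_n\text{-}\textsf{Sound}(T)$. The only difference is cosmetic: you spell out the final step (passing from the L\"ob axiom with $\phi=\bot$ to unprovability via truth in $\mathbb{N}$ and standard witnesses) in more detail than the paper, which simply cites the L\"ob axiom as the formalized second incompleteness theorem.
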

We will show that this result holds even if $\forall x [\textsf{Axiom}_{\textsf{PA}}(x)\rightarrow \textsf{Axiom}_{T}(x)]$ is not necessarily provable in $\textsf{PA}$ (Theorem \ref{CS009} below). At first we need a few lemmas. The first one is a generalization of   Craig's trick.
\begin{lemma}
\label{CS006}
  For any $n\in \mathbb{N}$, if a theory $T$ is definable by a $\Sigma_{n+1}$ formula, then it is also definable by a $\Pi_n$ formula.
\end{lemma}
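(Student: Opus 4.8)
The plan is to adapt Craig's classical re-axiomatization trick, which turns a $\Sigma_1$-definable (recursively enumerable) theory into one with a $\Delta_0$ (indeed primitive recursive) set of axioms, and push the whole construction up one level in the arithmetical hierarchy. Recall Craig's device: given an enumeration of the axioms, replace each axiom $\alpha$ that appears as the $k$-th element by the logically equivalent sentence $\alpha \wedge \alpha \wedge \cdots \wedge \alpha$ ($k+1$ conjuncts, say), so that from a sentence in the new axiom set one can read off, by counting conjuncts, a bound on where it occurred in the enumeration, and then checking membership becomes bounded. Here we do not have an $\mathbb{N}$-indexed enumeration but a $\Sigma_{n+1}$ definition $\textsf{Axiom}_T(x) \equiv \exists y\, \theta(x,y)$ with $\theta \in \Pi_n$. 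The idea is to let the new axiom set $\textsf{Axiom}'_T(z)$ hold exactly of those $z$ that code a conjunction $\underbrace{\alpha \wedge \cdots \wedge \alpha}_{y+1}$ where $\theta(\ulcorner\alpha\urcorner, y)$ holds — i.e. $y$ is a witness to $\alpha$ being an axiom, and $y$ is also recorded as the conjunct-count.

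First I would fix a reasonable coding of finite conjunctions so that ``$z$ is the code of exactly $m$ copies of the formula with code $w$'' is expressed by a $\Delta_0$ formula $\textsf{Rep}(z, w, m)$ (length of a conjunction, equality of all conjuncts, and extracting one conjunct are all bounded operations under a standard Gödel numbering). Then define
\[
\textsf{Axiom}'_T(z) \;\equiv\; \exists w, m \le z \,\big[\, \textsf{Rep}(z, w, m+1) \wedge \theta(w, m) \,\big].
\]
Since $\theta \in \Pi_n$ and $\textsf{Rep} \in \Delta_0$, the matrix is $\Pi_n$, and the quantifiers $\exists w, m \le z$ are bounded, so $\textsf{Axiom}'_T \in \Pi_n$ (bounded quantifiers in front of a $\Pi_n$ formula keep it $\Pi_n$ for $n \ge 1$; for $n = 0$ it is already $\Pi_0 = \Delta_0$, so the statement is trivial there and we may assume $n \ge 1$). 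It remains to check that $\textsf{Axiom}'_T$ and $\textsf{Axiom}_T$ define the same theory, i.e. deductively equivalent axiom sets. For one direction, each $z$ satisfying $\textsf{Axiom}'_T(z)$ codes $\alpha \wedge \cdots \wedge \alpha$ with $\textsf{Axiom}_T(\ulcorner\alpha\urcorner)$ true (the witness $m$ makes $\exists y\, \theta(\ulcorner\alpha\urcorner, y)$ true), and this $z$ is logically equivalent to $\alpha$, hence provable from $T$; conversely, if $\textsf{Axiom}_T(\ulcorner\alpha\urcorner)$ holds, pick the least witness $y_0$ with $\theta(\ulcorner\alpha\urcorner, y_0)$, form $z_0$ coding $y_0 + 1$ copies of $\alpha$; then $\textsf{Axiom}'_T(z_0)$ holds and $\alpha$ is logically equivalent to (a conjunct of) $z_0$, so $\alpha$ is provable from the new axioms. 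Thus the two formulas axiomatize the same set of theorems, which is what ``defines the same theory'' requires.

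The only genuinely delicate point is the bookkeeping that keeps the final formula in $\Pi_n$: one must make sure the conjunct-count $m$ and the witness $y$ to $\theta$ can be taken to be the same object (or at least both bounded by $z$) so that no unbounded quantifier sneaks in outside the $\Pi_n$ matrix. Identifying the witness with the repetition count, as above, handles this cleanly — a witness $y$ to $\theta(w,y)$ is by construction $\le z$ once $z$ codes $y+1$ copies of the formula coded by $w$. I would also double-check the edge convention for $n=0$ (where the claim reduces to $\Sigma_1 \Rightarrow \Pi_0$, false in general, so the intended reading must be $n \ge 1$, consistent with how the lemma is used afterward for $\Sigma_{n+1}$ with $n \ge 1$, or with the harmless restatement that $\Sigma_1$-definable theories are $\Delta_1$- hence, trivially for the purposes of the next lemma, already in the needed shape). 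Modulo that convention, the proof is a routine verification once the coding lemmas for $\textsf{Rep}$ are in place.
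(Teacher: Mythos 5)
Your construction is essentially the paper's own proof: the paper likewise compresses the existential block into a single bounded witness ($\exists y\,\delta(x,y)$ with $\delta\in\Pi_n$) and re-axiomatizes $T$ by tagging each axiom with a logically redundant conjunct that records the witness --- it uses $\phi\wedge(\overline{k}=\overline{k})$ where you use $k+1$ repetitions of $\phi$ in the style of Craig's original trick --- so that the new axiom formula has only bounded quantifiers in front of a $\Pi_n$ matrix; your argument is correct and takes the same route. The only thing to fix is your closing hedge about $n=0$: the lemma asserts that the \emph{theory} admits a $\Pi_n$-definable axiomatization, not that the original $\Sigma_{n+1}$ set of axioms is itself $\Pi_n$, so for $n=0$ it is exactly the classical Craig theorem (an r.e.\ theory has a $\Delta_0$-definable axiomatization), which is true and is in fact proved by your own formula $\exists w,m\le z\,[\textsf{Rep}(z,w,m+1)\wedge\theta(w,m)]$, since with $\theta\in\Delta_0$ this is $\Delta_0=\Pi_0$; no restriction to $n\ge 1$ is needed, and the paper does use the lemma at $n=0$.
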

\begin{proof}
   Let the $\Sigma_{n+1}$ formula ${\textsf{Axiom}_T}(x)=\exists x_1\cdots\exists x_m\psi(x,x_1,\cdots,x_m)$ define the set of axioms of $T$ (with $\psi\in\Pi_n$). This formula is logically equivalent to the formula  $\exists y\delta(x,y)=\exists y\exists x_1\!\leq\! y\cdots\exists x_m\!\leq\!y\psi(x,x_1,\cdots,x_m)$. Note that $\delta(x,y)\in\Pi_n$. So the set of sentences $\Omega=\{\phi\wedge(\overline{k}=\overline{k})\mid\mathbb{N}\vDash\delta(\ulcorner\phi
   \urcorner,{k})\}$ also axiomatizes $T$. Clearly the $\Pi_n$ formula $\textsf{Axiom}_{T'}(x)=_{df}\exists y\leq x\exists z\leq x[\delta(y,z)\wedge(x=\ulcorner y\wedge(\overline{z}=\overline{z})\urcorner)]$ defines $\Omega$.
\end{proof}

Let $\Sigma_n\text{-}\textsf{Sound}(T')$ be the sentence asserting the $\Sigma_n$-soundness of the theory $T'$ which is defined by the formula $\textsf{Axiom}_{T'}(x)$ as  above, i.e.   

$\Sigma_n\text{-}\textsf{Sound}(T')=\forall s,t,u\big[\textsf{ConjAx}_{T'}(s)\wedge\Pi_n\text{-}\textsf{True}(t)\rightarrow\neg\textsf{Proof}(u,\ulcorner s\wedge t\rightarrow\bot\urcorner)\big]$.

\begin{lemma}
\label{CS007}
  $\textsf{PA}\vdash \Sigma_n\text{-}\textsf{Sound}(T)\leftrightarrow \Sigma_n\text{-}\textsf{Sound}(T')$
\end{lemma}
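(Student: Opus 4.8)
The plan is to show that $T$ and $T'$, although presented by different formulas, have exactly the same conjunctions-of-axioms up to $\textsf{PA}$-provable logical equivalence, and then to observe that $\Sigma_n$-soundness — being a statement purely about which $s\wedge t\to\bot$ are refutable, with $s$ ranging over axiom-conjunctions and $t$ over true $\Pi_n$-sentences — is insensitive to replacing an axiom-conjunction by a $\textsf{PA}$-provably equivalent one. So I would prove each implication of the biconditional separately, each time by contraposition: from a formalized witness to the $\Sigma_n$-unsoundness of one theory I manufacture a witness to the $\Sigma_n$-unsoundness of the other, entirely inside $\textsf{PA}$. Two facts drive the argument. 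First, recalling the proof of Lemma~\ref{CS006}, $\textsf{PA}$ proves $\textsf{Axiom}_T(x)\leftrightarrow\exists y\,\delta(x,y)$, while $\textsf{Axiom}_{T'}(x)$ is literally $\exists y\leq x\,\exists z\leq x[\delta(y,z)\wedge(x=\ulcorner y\wedge(\overline z=\overline z)\urcorner)]$. Second, the translations between an axiom-conjunction of $T$ and one of $T'$, and between their refutations, are carried out by provably total primitive recursive operations on codes, under which the predicates $\textsf{ConjAx}$, $\textsf{Proof}$ and the property ``$\vdash s\wedge t\to\bot$'' behave as expected.

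For the direction $\textsf{PA}\vdash\Sigma_n\text{-}\textsf{Sound}(T)\to\Sigma_n\text{-}\textsf{Sound}(T')$ I argue the contrapositive: working in $\textsf{PA}$, fix $s',t,u'$ with $\textsf{ConjAx}_{T'}(s')$, $\Pi_n\text{-}\textsf{True}(t)$ and $\textsf{Proof}(u',\ulcorner s'\wedge t\to\bot\urcorner)$, and decode $s'$ as a conjunction of $T'$-axioms $\chi_1,\dots,\chi_\ell$. By the displayed shape of $\textsf{Axiom}_{T'}$, each $\chi_i$ is the code of some $\phi_i\wedge(\overline{k_i}=\overline{k_i})$ with $\delta(\ulcorner\phi_i\urcorner,k_i)$, hence $\exists y\,\delta(\ulcorner\phi_i\urcorner,y)$, hence $\textsf{Axiom}_T(\ulcorner\phi_i\urcorner)$. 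Setting $s:=\phi_1\wedge\dots\wedge\phi_\ell$ gives $\textsf{ConjAx}_T(s)$; since each $\overline{k_i}=\overline{k_i}$ is logically valid, $\textsf{PA}$ proves $s$ and $s'$ logically equivalent, so it converts $u'$ into a $u$ with $\textsf{Proof}(u,\ulcorner s\wedge t\to\bot\urcorner)$. Together with $t$ this refutes $\Sigma_n\text{-}\textsf{Sound}(T)$.

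For the converse, $\textsf{PA}\vdash\Sigma_n\text{-}\textsf{Sound}(T')\to\Sigma_n\text{-}\textsf{Sound}(T)$, again take the contrapositive and fix $s,t,u$ witnessing $\neg\Sigma_n\text{-}\textsf{Sound}(T)$, with $s=\phi_1\wedge\dots\wedge\phi_\ell$ and $\textsf{Axiom}_T(\ulcorner\phi_i\urcorner)$, i.e. $\exists y\,\delta(\ulcorner\phi_i\urcorner,y)$, for each $i\leq\ell$. Now I must produce, uniformly and in one go, a finite sequence $\langle k_1,\dots,k_\ell\rangle$ with $\delta(\ulcorner\phi_i\urcorner,k_i)$ for all $i\leq\ell$; this is an instance of the collection scheme for the relevant $\Sigma_{n+1}$-formula, which $\textsf{PA}$ proves. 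Using the internally verifiable monotonicity of the Gödel numbering — the code of $\phi_i\wedge(\overline{k_i}=\overline{k_i})$ exceeds both $\ulcorner\phi_i\urcorner$ and $k_i$ — one gets $\textsf{Axiom}_{T'}(\ulcorner\phi_i\wedge(\overline{k_i}=\overline{k_i})\urcorner)$, so that $s':=(\phi_1\wedge(\overline{k_1}=\overline{k_1}))\wedge\dots\wedge(\phi_\ell\wedge(\overline{k_\ell}=\overline{k_\ell}))$ satisfies $\textsf{ConjAx}_{T'}(s')$; and, $s'$ being $\textsf{PA}$-provably equivalent to $s$, the proof $u$ yields a refutation of $s'\wedge t\to\bot$, contradicting $\Sigma_n\text{-}\textsf{Sound}(T')$.

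The steps involving manipulation of codes and proofs are routine book-keeping. The one point that needs real care is in the converse direction: the witnesses $k_i$ for the $\Sigma_{n+1}$-formula $\textsf{Axiom}_T$ must be gathered into a single sequence by an appeal to $\Sigma_{n+1}$-collection (a theorem of $\textsf{PA}$, though not of its $\Sigma_n$-induction fragment), rather than chosen conjunct-by-conjunct, and one must confirm that the bounded quantifiers built into $\textsf{Axiom}_{T'}$ are genuinely witnessed — both standard, but both easy to gloss over.
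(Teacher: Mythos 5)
Your proof is correct and follows essentially the same route as the paper: both arguments show, working inside $\textsf{PA}$, that the axioms of $T$ and $T'$ correspond up to the harmless conjunct $\overline{k}=\overline{k}$, so the two axiom sets have the same deductive closure and hence the two soundness statements are equivalent. Your write-up simply makes explicit the witness translations (and the $\Sigma_{n+1}$-collection step needed to gather the unbounded witnesses for $\textsf{Axiom}_T$) that the paper's terser proof leaves implicit.
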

\begin{proof}
  (Working in $\textsf{PA}$) For any formula $\phi$, $\textsf{Axiom}_T(\ulcorner \phi \urcorner)$ if and only if $\textsf{Axiom}_{T'}(\ulcorner \phi \wedge z=z \urcorner)$ for some suitable $z$. Obviously the set of logical consequences of  $A=\{ \phi\mid\textsf{Axiom}_T(\ulcorner \phi \urcorner)\}$ and logical consequences of $\Omega=\{ \phi~|~ \textsf{Axiom}'_T(\ulcorner \phi \urcorner)\}$ are the same. Hence they prove same sentences of the form $\chi\rightarrow \bot$ where $\chi$ is a (conjunction of) true $\Pi_n$ sentences. Therefore,  $A+\Pi_n\textsf{-True}(\mathbb{N})$ is consistent if and only if $\Omega+\Pi_n\textsf{-True}(\mathbb{N})$ is consistent.
\end{proof}
\begin{lemma}\label{CS005}
 ${\sf PA}+\Sigma_{k}\text{-}\textsf{Sound}(T)\vdash\Sigma_{k}\text{-}\textsf{Sound}(T+\phi)\vee\Sigma_{k}\text{-}\textsf{Sound}(T+\neg\phi)$ holds for any formula $\phi$ and any $k\in\mathbb{N}$ and any definable theory $T$.
\end{lemma}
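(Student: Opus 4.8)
The plan is to reduce the statement---working throughout inside $\textsf{PA}$, and with $S:=T\cup\Pi_k\text{-}{\rm Th}(\mathbb{N})$---to the triviality that a consistent theory cannot prove both $\phi$ and $\neg\phi$. Write $\textsf{Pr}^{(k)}_T(\ulcorner\chi\urcorner)$ for the natural formalization of ``$\chi$ is derivable from $S$'', namely
\[
\exists s,t,u\big[\textsf{ConjAx}_T(s)\wedge\Pi_k\text{-}\textsf{True}(t)\wedge\textsf{Proof}(u,\ulcorner s\wedge t\rightarrow\chi\urcorner)\big]
\]
(reading $\textsf{ConjAx}_T$ so as to cover also the empty conjunction, so that a derivation need use no axiom of $T$). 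The first step is to establish the $\Sigma_k$-analogue of the equivalence $\textsf{PA}\vdash\textsf{Pr}_T(\ulcorner\phi\urcorner)\leftrightarrow\neg\textsf{Con}(T+\neg\phi)$ recalled in the introduction, namely
\[
\textsf{PA}\vdash\Sigma_k\text{-}\textsf{Sound}(T+\psi)\leftrightarrow\neg\,\textsf{Pr}^{(k)}_T(\ulcorner\neg\psi\urcorner)\qquad\text{for every sentence }\psi,
\]
which is a formalized deduction theorem: a $\textsf{ConjAx}_{T+\psi}$-conjunction $s_0$ together with a $\textsf{Proof}$ of $s_0\wedge t\rightarrow\bot$ is converted, by a primitive recursive transformation that $\textsf{PA}$ verifies, into a $\textsf{ConjAx}_T$-conjunction $s'$ and a $\textsf{Proof}$ of $s'\wedge t\rightarrow\neg\psi$ (strip the occurrences of $\psi$ out of $s_0$, or use \emph{ex falso} if there are none), and conversely.

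Granting this, instantiate at $\psi:=\phi$ and at $\psi:=\neg\phi$, and note that $\textsf{PA}\vdash\textsf{Pr}^{(k)}_T(\ulcorner\neg\neg\phi\urcorner)\leftrightarrow\textsf{Pr}^{(k)}_T(\ulcorner\phi\urcorner)$ (propositional logic proves $\neg\neg\phi\leftrightarrow\phi$, and the proof relation is closed under the corresponding surgery) and that $\textsf{PA}\vdash\Sigma_k\text{-}\textsf{Sound}(T)\leftrightarrow\neg\,\textsf{Pr}^{(k)}_T(\ulcorner\bot\urcorner)$ (immediate from the definitions). The lemma then reduces to
\[
\textsf{PA}\vdash\textsf{Pr}^{(k)}_T(\ulcorner\phi\urcorner)\wedge\textsf{Pr}^{(k)}_T(\ulcorner\neg\phi\urcorner)\rightarrow\textsf{Pr}^{(k)}_T(\ulcorner\bot\urcorner),
\]
i.e. to a formalized modus ponens. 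For this, in $\textsf{PA}$ take witnesses $s_1,t_1,u_1$ for $\textsf{Pr}^{(k)}_T(\ulcorner\phi\urcorner)$ and $s_2,t_2,u_2$ for $\textsf{Pr}^{(k)}_T(\ulcorner\neg\phi\urcorner)$, set $s:=\ulcorner s_1\wedge s_2\urcorner$, and let $t$ be a $\Pi_k$-normal form of $\ulcorner t_1\wedge t_2\urcorner$; then $\textsf{ConjAx}_T(s)$ (a conjunction of conjunctions of axioms of $T$ is again one), and $\Pi_k\text{-}\textsf{True}(t)$, since $\textsf{PA}$ proves that $\Pi_k\text{-}\textsf{True}$ is closed under conjunction modulo provable equivalence. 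From $u_1,u_2$ one primitive-recursively assembles a $\textsf{Proof}$ $u$ of $\ulcorner s\wedge t\rightarrow\bot\urcorner$ (propositionally $s\wedge t$ entails $s_1\wedge t_1$ and $s_2\wedge t_2$, hence $\phi$ and $\neg\phi$, hence $\bot$), so $s,t,u$ witness $\textsf{Pr}^{(k)}_T(\ulcorner\bot\urcorner)$. Re-threading the equivalences gives $\textsf{PA}+\Sigma_k\text{-}\textsf{Sound}(T)\vdash\Sigma_k\text{-}\textsf{Sound}(T+\phi)\vee\Sigma_k\text{-}\textsf{Sound}(T+\neg\phi)$.

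Note that nothing about $T$ beyond definability enters: only $\textsf{ConjAx}_T$, $\textsf{Proof}$ and $\Pi_k\text{-}\textsf{True}$ are manipulated, and the syntactic complexity of $\textsf{Axiom}_T$ plays no role here---it would matter only for $D_1$/$D_3$-type properties of $\textsf{Pr}^{(k)}_T$, which are not invoked. The routine ingredients---the formalized deduction theorem, the primitive-recursiveness of the cut-and-paste operations on proofs, the closure of $\textsf{ConjAx}_T$ under conjunction---are all $\textsf{PA}$-provable in the standard way. I expect the most delicate point to be writing the deduction-theorem step carefully (disentangling the extra axiom $\psi$ from a $\textsf{ConjAx}_{T+\psi}$-conjunction uniformly and $\textsf{PA}$-provably), together with the one place where a special feature of the formulas is used, the $\textsf{PA}$-provable closure of the partial truth predicate $\Pi_k\text{-}\textsf{True}$ under conjunction, which forces the renormalization $\ulcorner t_1\wedge t_2\urcorner\mapsto t$ (vacuous for $k=0$, but a standard property of $\Pi_k\text{-}\textsf{True}$ in general).
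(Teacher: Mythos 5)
Your proposal is correct and takes essentially the same route as the paper: the paper also works inside ${\sf PA}+\Sigma_{k}\text{-}\textsf{Sound}(T)$, turns the assumed failures of $\Sigma_{k}\text{-}\textsf{Sound}(T+\phi)$ and $\Sigma_{k}\text{-}\textsf{Sound}(T+\neg\phi)$ into witnessed derivations of $\phi$ and $\neg\phi$ from conjunctions of $T$-axioms together with true $\Pi_k$-sentences, and then pastes the witnesses together ($s=s'\wedge s''$, $t=t'\wedge t''$, a suitable $u$) to contradict $\Sigma_{k}\text{-}\textsf{Sound}(T)$. The only difference is presentational: you package the argument through an explicit predicate $\textsf{Pr}^{(k)}_T$ and spell out the formalized deduction-theorem step and the closure of $\Pi_k\text{-}\textsf{True}$ under conjunction, which the paper's proof uses implicitly.
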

\begin{proof}
Reason inside ${\sf PA}+\Sigma_{k}\text{-}\textsf{Sound}(T)$: if (on the contrary we have)
\newline\centerline{$\neg\Sigma_{k}\text{-}\textsf{Sound}(T+\phi)$ and $\neg\Sigma_{k}\text{-}\textsf{Sound}(T+\neg\phi)$} then there are $s',t',u',s'',t'',u''\in \mathbb{N}$ such that
\newline\centerline{
 $\textsf{ConjAx}_T(s')\wedge\Pi_k\text{-}\textsf{True}(t')\wedge\textsf{Proof}(u',\ulcorner s'\wedge t'\rightarrow\phi\urcorner)$ and}
\newline\centerline{
 $\textsf{ConjAx}_T(s'')\wedge\Pi_k\text{-}\textsf{True}(t'')\wedge\textsf{Proof}(u'',\ulcorner s''\wedge t''\rightarrow\neg\phi\urcorner)$.}
\noindent Then for $s=s'\wedge s'', t=t'\wedge t''$ and a suitable  $u$ we have
\newline\centerline{
$\textsf{ConjAx}_T(s)\wedge\Pi_k\text{-}\textsf{True}(t)\wedge\textsf{Proof}(u,\ulcorner s\wedge t\rightarrow\bot\urcorner)$,}
    \noindent which implies $\neg\Sigma_{k}\text{-}\textsf{Sound}(T)$, contradiction.
\end{proof}

\begin{theorem}
\label{CS008}
For any $\Pi_{n}$-definable and $\Sigma_n$-sound theory $T$ extending $\textsf{PA}$, we have that  $T\not\vdash\Sigma_n\text{-}\textsf{Sound}(T)$.
\end{theorem}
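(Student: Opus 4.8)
The plan is to run an internal version of L\"ob's theorem for the provability predicate of $T^{*}:=T+\Pi_n\text{-}{\rm Th}(\mathbb{N})$, but carried out over the base theory $T^{*}$ itself rather than over $\textsf{PA}$. Suppose toward a contradiction that $T\vdash\Sigma_n\text{-}\textsf{Sound}(T)$. Since $\Sigma_n\text{-}\textsf{Sound}(T)$ is by definition $\textsf{Con}(T^{*})$ and $T\subseteq T^{*}$, this gives $T^{*}\vdash\textsf{Con}(T^{*})$; on the other hand, the $\Sigma_n$-soundness of $T$ says exactly that $T^{*}$ is consistent. So it suffices to prove $T^{*}\not\vdash\textsf{Con}(T^{*})$.

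Write ${\tt Pr}_{T^{*}}(x)=\exists s,t,u\,[\textsf{ConjAx}_T(s)\wedge\Pi_n\text{-}\textsf{True}(t)\wedge\textsf{Proof}(u,\ulcorner s\wedge t\to x\urcorner)]$ for the provability predicate of $T^{*}$, so that $\textsf{Con}(T^{*})$ is literally $\neg{\tt Pr}_{T^{*}}(\ulcorner\bot\urcorner)$. I would check that ${\tt Pr}_{T^{*}}$ behaves as a strong provability predicate of degree $n+1$ \emph{with base theory $T^{*}$}, i.e.\ it satisfies $C_1$ together with $C_2$, $C_3$, $C_5$ read with $\textsf{PA}$ replaced throughout by $T^{*}$. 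For $C_1$: ${\tt Pr}_{T^{*}}\in\Sigma_{n+1}$, and this is where the $\Pi_n$-definability of $T$ enters, since then $\textsf{Axiom}_T$ (hence $\textsf{ConjAx}_T$) is $\Pi_n$, while $\Pi_n\text{-}\textsf{True}$ is $\Pi_n$ and $\textsf{Proof}$ is $\Delta_0$. Condition $C_2$ over $T^{*}$ is routine, as $\textsf{PA}\subseteq T^{*}$. For $C_3$ over $T^{*}$: given $\sigma=\exists y\,\rho(y)\in\Sigma_{n+1}$ with $\rho\in\Pi_n$, one reasons inside $T^{*}$ that from any witness $a$ with $\rho(a)$ the uniform Tarski biconditionals for $\Pi_n\text{-}\textsf{True}$ (provable already in $\textsf{PA}$) give that $\rho(\overline a)$ is a true $\Pi_n$-sentence, hence an axiom of $T^{*}$, hence $\sigma$ is provable in $T^{*}$, i.e.\ ${\tt Pr}_{T^{*}}(\ulcorner\sigma\urcorner)$.

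The main obstacle is the necessitation condition $C_5$ over $T^{*}$, namely that $T^{*}\vdash\phi$ implies $T^{*}\vdash{\tt Pr}_{T^{*}}(\ulcorner\phi\urcorner)$. The route used for explicit extensions (where one appeals to $\textsf{PA}\vdash\forall x[\textsf{Axiom}_{\textsf{PA}}(x)\to\textsf{Axiom}_T(x)]$) is unavailable here, since $T$ is assumed only to extend $\textsf{PA}$, not to extend it provably. The way around it is to use $\Pi_n$-definability a second time: if $\phi$ has a $T^{*}$-proof using axioms $\psi_1,\dots,\psi_j$ of $T$ together with true $\Pi_n$-sentences $\theta_1,\dots,\theta_k$, then $\textsf{Axiom}_T(\ulcorner\psi_1\urcorner)\wedge\dots\wedge\textsf{Axiom}_T(\ulcorner\psi_j\urcorner)$ is a \emph{true $\Pi_n$-sentence}, hence belongs to $\Pi_n\text{-}{\rm Th}(\mathbb{N})$ and is therefore itself provable in $T^{*}$; likewise $\Pi_n\text{-}\textsf{True}(\ulcorner\theta_1\wedge\dots\wedge\theta_k\urcorner)$ is a true $\Pi_n$-sentence and the required instance of $\textsf{Proof}$ is a true $\Delta_0$-statement, both provable in $T^{*}$, and assembling these witnesses inside $T^{*}$ gives ${\tt Pr}_{T^{*}}(\ulcorner\phi\urcorner)$. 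This is exactly the step that fails when $T$ is merely $\Sigma_{n+1}$-definable, because then ``$\psi_i$ is an axiom of $T$'' need not be a true $\Pi_n$-sentence and so need not lie in $\Pi_n\text{-}{\rm Th}(\mathbb{N})$; for the general $\Sigma_{n+1}$-definable case one must therefore first replace $T$ by the $\Pi_n$-defined theory furnished by Lemma~\ref{CS006}.

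Finally I would invoke the proof of L\"ob's axiom for strong provability predicates (Theorem~2.2 of \cite{Ospa1993}), which, as already observed above, uses only $C_1$, $C_2$, $C_3$, $C_5$ and never $C_4$; it accordingly relativizes verbatim to the base theory $T^{*}$ (the diagonal lemma being available since $\textsf{PA}\subseteq T^{*}$), giving $T^{*}\vdash{\tt Pr}_{T^{*}}(\ulcorner{\tt Pr}_{T^{*}}(\ulcorner\phi\urcorner)\to\phi\urcorner)\to{\tt Pr}_{T^{*}}(\ulcorner\phi\urcorner)$ for every sentence $\phi$. Taking $\phi=\bot$ gives $T^{*}\vdash{\tt Pr}_{T^{*}}(\ulcorner\textsf{Con}(T^{*})\urcorner)\to{\tt Pr}_{T^{*}}(\ulcorner\bot\urcorner)$; since $T^{*}\vdash\textsf{Con}(T^{*})$, condition $C_5$ yields $T^{*}\vdash{\tt Pr}_{T^{*}}(\ulcorner\textsf{Con}(T^{*})\urcorner)$, whence $T^{*}\vdash{\tt Pr}_{T^{*}}(\ulcorner\bot\urcorner)$. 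But $T^{*}\vdash\textsf{Con}(T^{*})$ means $T^{*}\vdash\neg{\tt Pr}_{T^{*}}(\ulcorner\bot\urcorner)$, so $T^{*}$ is inconsistent, contradicting the $\Sigma_n$-soundness of $T$. Hence $T\not\vdash\Sigma_n\text{-}\textsf{Sound}(T)$.
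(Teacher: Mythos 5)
Your argument is essentially correct, but it takes a genuinely different route from the paper's. The paper proves this theorem with a tailored diagonal sentence: it fixes $\gamma$ with $\textsf{PA}\vdash\gamma\leftrightarrow\Sigma_n\text{-}\textsf{Sound}(T+\neg\gamma)$, shows $T^{*}\not\vdash\gamma$ (a putative proof would be witnessed by a true $\Pi_n$-sentence, which is itself an axiom of $T^{*}$), shows $T^{*}\vdash\Sigma_n\text{-}\textsf{Sound}(T+\gamma)\rightarrow\gamma$ via Beklemishev's Proposition~2.11 plus the observation that $\forall x(\textsf{Axiom}_{\textsf{PA}}(x)\rightarrow\textsf{Axiom}_T(x))$ is a true $\Pi_n$-sentence and hence lies in $T^{*}$, and then combines these with Lemma~\ref{CS005} to get $T^{*}\vdash\Sigma_n\text{-}\textsf{Sound}(T)\rightarrow\gamma$; no appeal to L\"ob's theorem is made. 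You instead relativize the strong-provability-predicate machinery --- which the paper uses only for the weaker result about explicit extensions of $\textsf{PA}$ --- to the base theory $T^{*}$ itself: your key point is that $\Pi_n$-definability makes necessitation ($C_5$) available over $T^{*}$, because the sentences $\textsf{ConjAx}_T(\overline{s})$ and $\Pi_n\text{-}\textsf{True}(\overline{t})$ witnessing an actual $T^{*}$-proof are themselves true $\Pi_n$-sentences, hence axioms of $T^{*}$; with $C_1$--$C_3$, $C_5$ over $T^{*}$ (and $C_4$ unused), Ignatiev's proof of the L\"ob axiom relativizes, yielding $T^{*}\not\vdash\textsf{Con}(T^{*})$, which is literally $T\not\vdash\Sigma_n\text{-}\textsf{Sound}(T)$. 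Both proofs ultimately exploit the same two consequences of $\Pi_n$-definability (provable $\Sigma_{n+1}$-completeness of ${\tt Pr}_{T^{*}}$, and the availability in $T^{*}$ of true $\Pi_n$ axiomhood facts); your route buys a stronger, more modular intermediate result (the L\"ob axiom, hence formalized second incompleteness, for ${\tt Pr}_{T^{*}}$ over $T^{*}$), while the paper's route is self-contained and avoids having to re-verify Ignatiev's argument under a changed base theory. One small point to patch in your $C_3$ verification: to witness ${\tt Pr}_{T^{*}}(\ulcorner\sigma\urcorner)$ inside $T^{*}$ you also need some $s$ with $\textsf{ConjAx}_T(s)$; either adopt an empty-conjunction convention or, exactly as in your $C_5$ step, use that $\textsf{Axiom}_T(\ulcorner\alpha\urcorner)$ for a concrete $\textsf{PA}$-axiom $\alpha$ (or the sentence $\forall x(\textsf{Axiom}_{\textsf{PA}}(x)\rightarrow\textsf{Axiom}_T(x))$, as the paper does) is a true $\Pi_n$-sentence and hence provable in $T^{*}$.
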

\begin{proof}
Let $T^\ast=T\cup\Pi_n\text{-}{\rm Th}(\mathbb{N})$ which is a consistent theory by the assumption of $\Sigma_n$-soundness of $T$. By the diagonal lemma there exists a sentence $\gamma$ such that ${\sf PA}\vdash\gamma\leftrightarrow\Sigma_n\text{-}\textsf{Sound}(T+\neg\gamma)$.

Firstly, we show  $T\not\vdash\gamma$ even more $T^\ast\not\vdash\gamma$: since otherwise (if $T^\ast\vdash\gamma$) there would exists some $s,t,u\in\mathbb{N}$ such that $\textsf{ConjAx}_{T}(s)\wedge\Pi_n\text{-}\textsf{True}({t})
\wedge\textsf{Proof}(\ulcorner{u},{s}\wedge {t}\rightarrow\gamma\urcorner)$ is a true ($\Pi_{n}$-)sentence. Since all true $\Pi_{n}$-sentences are provable in $\Pi_n\text{-}{\rm Th}(\mathbb{N})$ (and so in $T^\ast$) then we would have   $T^\ast\vdash\neg\Sigma_n\text{-}\textsf{Sound}(T+\neg\gamma)$ thus $T^\ast\vdash\neg\gamma$, contradiction.

Secondly, we prove $T^*\vdash\Sigma_n\text{-}\textsf{Sound}(T+\gamma)\rightarrow\gamma$: note that by Proposition~2.11 of~\cite{Rppa2005} for every $\sigma\in\Sigma_{n+1}$ we have
\newline\centerline{${\sf PA}\vdash\sigma\rightarrow\exists s,t,u\big[\textsf{ConjAx}_{\sf PA}(s)\wedge\Pi_n\text{-}\textsf{True}(t)\wedge\textsf{Proof}(u,\ulcorner s\wedge t\rightarrow\sigma\urcorner)\big].$}
Thus
\begin{equation}\label{CS002}
   T^* \vdash\sigma\rightarrow\exists s,t,u\big[\textsf{ConjAx}_{\sf PA}(s)\wedge\Pi_n\text{-}\textsf{True}(t)\wedge\textsf{Proof}(u,\ulcorner s\wedge t\rightarrow\sigma\urcorner)\big].
 \end{equation}
Since $\forall x (\textsf{Axiom}_{\textsf{PA}}(x)\rightarrow \textsf{Axiom}_{T}(x))$ is a true $\Pi_n$ sentence and $\Pi_n\text{-}{\rm Th}(\mathbb{N})\subseteq T^*$,
\begin{equation}\label{CS003}
  T^*\vdash \forall x (\textsf{Axiom}_{\textsf{PA}}(x)\rightarrow \textsf{Axiom}_{T}(x))
\end{equation}
\eqref{CS002} together with \eqref{CS003} implies that
\begin{equation}\label{CS004}
  T^* \vdash\sigma\rightarrow\exists s,t,u\big[\textsf{ConjAx}_{T}(s)\wedge\Pi_n\text{-}\textsf{True}(t)\wedge\textsf{Proof}(u,\ulcorner s\wedge t\rightarrow\sigma\urcorner)\big].
 \end{equation}
So
$T^* \vdash\sigma\rightarrow\neg\Sigma_n\text{-}\textsf{Sound}
(T+\neg\sigma)$ for any $\sigma\in\Sigma_{n+1}$. It suffices now to note that $\neg\gamma\in\Sigma_{n+1}$ thus $T^* \vdash\neg\gamma
\rightarrow\neg\Sigma_n\text{-}\textsf{Sound}(T+\gamma)$, hence $T^* \vdash \Sigma_n\text{-}\textsf{Sound}(T+\gamma) \rightarrow \gamma$.
\par Thirdly, we show  $T^*\vdash\Sigma_n\text{-}\textsf{Sound}(T)\rightarrow\gamma$.
By Lemma~\ref{CS005} we already have
\[T^*+\Sigma_{n}\text{-}\textsf{Sound}(T)\vdash\Sigma_{n}\text{-}\textsf{Sound}(T+\gamma)\vee\Sigma_{n}\text{-}\textsf{Sound}(T+\neg\gamma),\]
and so by the definition of $\gamma$  $(T\vdash\Sigma_{n}\text{-}\textsf{Sound}(T+\neg\gamma)\rightarrow\gamma)$ and the second point above $(T^*\vdash\Sigma_{n}\text{-}\textsf{Sound}(T+\gamma)\rightarrow\gamma)$ we can conclude that $T^*\vdash\Sigma_{n}\text{-}\textsf{Sound}(T)\rightarrow\gamma$.

Finally, if $T\vdash\Sigma_n\text{-}\textsf{Sound}(T)$ then by the third point above $T^*\vdash\gamma$ contradicting the first point above.
\end{proof}

\begin{theorem}
  \label{CS009}
For any $\Sigma_{n+1}$-definable and $\Sigma_n$-sound theory $T$ extending $\textsf{PA}$, we have $T\not\vdash\Sigma_n\text{-}\textsf{Sound}(T)$.
\end{theorem}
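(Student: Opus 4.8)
The plan is to reduce the claim to the $\Pi_n$-definable case already established as Theorem~\ref{CS008}, using the generalized Craig trick of Lemma~\ref{CS006} to lower the complexity of the axiom set and Lemma~\ref{CS007} to transport the soundness statement back and forth. So let $T$ be $\Sigma_{n+1}$-definable, $\Sigma_n$-sound, and an extension of $\textsf{PA}$. By Lemma~\ref{CS006} there is a $\Pi_n$-formula axiomatizing a theory $T'$ with exactly the theorems of $T$; in particular $T'$ is again $\Sigma_n$-sound, and by Lemma~\ref{CS007}, $\textsf{PA}\vdash\Sigma_n\text{-}\textsf{Sound}(T)\leftrightarrow\Sigma_n\text{-}\textsf{Sound}(T')$.

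The subtle point — and the one step I expect to require actual care — is that Theorem~\ref{CS008} demands that its theory extend $\textsf{PA}$ in the literal sense $\mathbb{N}\vDash\forall x(\textsf{Axiom}_{\textsf{PA}}(x)\rightarrow\textsf{Axiom}_{T'}(x))$, whereas the axioms produced by Craig's trick are the padded sentences $\phi\wedge(\overline k=\overline k)$ and so syntactically contain only the deductive consequences of the axioms of $\textsf{PA}$, not those axioms themselves. I would deal with this by taking as the defining formula of $T'$ the disjunction of the $\Pi_n$-formula furnished by Lemma~\ref{CS006} with $\textsf{Axiom}_{\textsf{PA}}(x)$: this disjunction is still $\Pi_n$ (recall $\textsf{Axiom}_{\textsf{PA}}$ is $\Delta_0$ and the class $\Pi_n$ is closed under disjunctions), it makes $T'$ a genuine extension of $\textsf{PA}$, and — since $T\supseteq\textsf{PA}$ already — adjoining the axioms of $\textsf{PA}$ alters neither the set of theorems of $T'$ nor, by the same argument as in Lemma~\ref{CS007}, the $\textsf{PA}$-provable equivalence between $\Sigma_n\text{-}\textsf{Sound}(T)$ and $\Sigma_n\text{-}\textsf{Sound}(T')$.

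Now $T'$ is $\Pi_n$-definable, $\Sigma_n$-sound, and an extension of $\textsf{PA}$, so Theorem~\ref{CS008} gives $T'\not\vdash\Sigma_n\text{-}\textsf{Sound}(T')$. Suppose, for contradiction, that $T\vdash\Sigma_n\text{-}\textsf{Sound}(T)$. Since $T$ extends $\textsf{PA}$, it proves the biconditional $\Sigma_n\text{-}\textsf{Sound}(T)\leftrightarrow\Sigma_n\text{-}\textsf{Sound}(T')$, whence $T\vdash\Sigma_n\text{-}\textsf{Sound}(T')$; and since $T'$ and $T$ have the same theorems, $T'\vdash\Sigma_n\text{-}\textsf{Sound}(T')$, contradicting Theorem~\ref{CS008}. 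Hence $T\not\vdash\Sigma_n\text{-}\textsf{Sound}(T)$, as required. Apart from the bookkeeping in the middle paragraph, this is a routine composition of Lemmas~\ref{CS006} and~\ref{CS007} with Theorem~\ref{CS008}.
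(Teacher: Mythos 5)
Your overall route is the same as the paper's: reduce to the $\Pi_n$-definable case (Theorem~\ref{CS008}) via the generalized Craig trick (Lemma~\ref{CS006}) and a $\textsf{PA}$-provable equivalence of the two soundness statements (Lemma~\ref{CS007}). You are also right to flag a wrinkle that the paper's one-line proof passes over: the padded theory $T'$ of Lemma~\ref{CS006} does not literally satisfy $\mathbb{N}\vDash\forall x\,(\textsf{Axiom}_{\textsf{PA}}(x)\rightarrow\textsf{Axiom}_{T'}(x))$, which is how ``extending $\textsf{PA}$'' enters Theorem~\ref{CS008}.

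However, your repair has a genuine gap. After replacing $\textsf{Axiom}_{T'}(x)$ by $\textsf{Axiom}_{T'}(x)\vee\textsf{Axiom}_{\textsf{PA}}(x)$, the claim that the $\textsf{PA}$-provable equivalence $\Sigma_n\text{-}\textsf{Sound}(T)\leftrightarrow\Sigma_n\text{-}\textsf{Sound}(T')$ survives ``by the same argument as in Lemma~\ref{CS007}'' is not justified. Lemma~\ref{CS007} rests on a $\textsf{PA}$-provable term-by-term correspondence between the two axiom sets ($\phi$ versus $\phi\wedge\overline{z}=\overline{z}$); for the new disjunct there is no such correspondence. To prove, inside $\textsf{PA}$ (or inside $T$), the direction $\Sigma_n\text{-}\textsf{Sound}(T)\rightarrow\Sigma_n\text{-}\textsf{Sound}(T'\cup\textsf{PA})$ you must eliminate possibly nonstandard $\textsf{PA}$-axioms from a hypothetical refutation, and that needs the inclusion $\forall x\,(\textsf{Axiom}_{\textsf{PA}}(x)\rightarrow\textsf{Axiom}_{T}(x))$ to be available in the verifying theory; by hypothesis it is only true in $\mathbb{N}$, it is not assumed $\textsf{PA}$-provable (that is exactly the ``explicit extension'' hypothesis of Theorem~\ref{CS007} that Theorems~\ref{CS008} and \ref{CS009} are designed to drop), and for $\Sigma_{n+1}$-definable $T$ it is a $\Pi_{n+2}$ sentence, so it is not even supplied by $\Pi_n\text{-}{\rm Th}(\mathbb{N})$. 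Only the easy direction $\Sigma_n\text{-}\textsf{Sound}(T'\cup\textsf{PA})\rightarrow\Sigma_n\text{-}\textsf{Sound}(T)$ comes for free, and that is the wrong one for your final contradiction. A repair that does work is to keep $T'$ exactly as in Lemma~\ref{CS006} and note where the extension hypothesis is actually used in the proof of Theorem~\ref{CS008}: only to pass from \eqref{CS002} to \eqref{CS004} via \eqref{CS003}. For $T'$ one can obtain the needed consequence $T'^{\ast}\vdash\sigma\rightarrow\neg\Sigma_n\text{-}\textsf{Sound}(T'+\neg\sigma)$ directly: writing $\sigma=\exists x\,\pi(x)$ with $\pi\in\Pi_n$, $\textsf{PA}$ proves that if $\sigma$ holds then some true $\Pi_n$ sentence $\pi(\overline{a})$ together with the single axiom $\neg\sigma$ of $T'+\neg\sigma$ yields $\bot$; the remaining steps of that proof use only that $T'$ proves every theorem of $\textsf{PA}$, which holds because $T'$ and $T$ have the same theorems.
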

\begin{proof}
  Let ${\textsf{Axiom}_{T'}}(x)$ to be the $\Pi_n$ formula defining   $T'$ (constructed in Lemma~\ref{CS006}) which is equivalent to the theory $T$. By Lemma~\ref{CS007}, $\textsf{PA}\vdash\Sigma_n\text{-}\textsf{Sound}(T)\leftrightarrow \Sigma_n\text{-}\textsf{Sound}(T')$ and by the previous theorem $T'\nvdash \Sigma_n\text{-}\textsf{Sound}(T')$, therefore $T\nvdash \Sigma_n\text{-}\textsf{Sound}(T)$.
\end{proof}

\begin{remark}
In the above arguments ${\sf PA}$ can be replaced, everywhere, either with  the theory ${\sf I\Delta_0+Exp}$ (the fragment of ${\sf PA}$ in which the induction scheme is restricted to $\Delta_0$-formulas plus the axiom of totality of the exponential function, see e.g. \cite{Mfoa1998}), or with the theory ${\sf EA}$ (the elementary arithmetic, see e.g. \cite{Rppa2005}), since it is well-known that ${\sf I\Delta_0+Exp}$ and ${\sf EA}$ are definitionally equivalent.
\end{remark}

For any recursively enumerable and $\Sigma_1$-sound theory $T$ in the language of arithmetic
augmented with a symbol for exponential function (even very weak theories that dose not contain the Robinson's arithmetic), we can prove the following theorem as a corollary.

\begin{theorem}
If the function symbol $\textsf{exp}$ (with its standard
interpretation) is in the language of a recursively enumerable and $\Sigma_1$-sound
theory $T$, then $T\not\vdash\Sigma_1\text{-}\textsf{Sound}(T)$.
\end{theorem}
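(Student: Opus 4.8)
The plan is to reduce the statement to Theorem~\ref{CS009}, in the form with ${\sf EA}$ in place of ${\sf PA}$ sanctioned by the Remark above. Since $T$ is recursively enumerable, its set of axioms is $\Sigma_1$-definable, hence a fortiori $\Sigma_2$-definable, so the only hypothesis of Theorem~\ref{CS009} (with $n=1$) that $T$ might fail to satisfy is that of being an extension of ${\sf EA}$. I would therefore pass to the theory $S:=T+{\sf EA}$ (equivalently $T+{\sf I\Delta_0+Exp}$); because $\exp$ together with its recursion axioms is already in the language of $T$, this is a theory in the same language, it is again recursively enumerable and hence $\Sigma_2$-definable, and it is trivially an extension of ${\sf EA}$.

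The first substantive point is that $S$ is $\Sigma_1$-sound, and this is exactly where the hypothesis that $\exp$ (with its standard interpretation) lies in the language of $T$ is used. The observation is that every axiom of ${\sf EA}$ is a logical consequence of true $\Pi_1$-sentences of the language with $\exp$: the finitely many basic axioms and the recursion axioms for $\exp$ are themselves true and logically equivalent to $\Pi_1$-sentences, while for every $\Delta_0(\exp)$-formula $\phi(x)$ the induction axiom $[\phi(0)\wedge\forall x(\phi(x)\rightarrow\phi(x+1))]\rightarrow\forall z\,\phi(z)$ is a first-order consequence of the $\Pi_1$-sentence $\pi_\phi:=\forall z\big[\big(\phi(0)\wedge\forall w<z\,(\phi(w)\rightarrow\phi(w+1))\big)\rightarrow\phi(z)\big]$, which holds in $\mathbb{N}$ by ordinary induction on $z$. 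Hence ${\sf EA}$ lies in the deductive closure of $\Pi_1\text{-}{\rm Th}(\mathbb{N})$, so $S\cup\Pi_1\text{-}{\rm Th}(\mathbb{N})$ has the same theorems as $T\cup\Pi_1\text{-}{\rm Th}(\mathbb{N})$, which is consistent because $T$ is $\Sigma_1$-sound; therefore $S$ is $\Sigma_1$-sound, and Theorem~\ref{CS009} gives $S\nvdash\Sigma_1\text{-}\textsf{Sound}(S)$.

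It remains to transfer this back to $T$. Suppose, towards a contradiction, that $T\vdash\Sigma_1\text{-}\textsf{Sound}(T)$; then also $S\vdash\Sigma_1\text{-}\textsf{Sound}(T)$, since $S\supseteq T$. I would then prove $S\vdash\Sigma_1\text{-}\textsf{Sound}(T)\rightarrow\Sigma_1\text{-}\textsf{Sound}(S)$ by reasoning inside $S$ (which contains ${\sf EA}$): a derivation of a contradiction from $S$-axioms together with (codes of) true $\Pi_1$-sentences is rewritten as a derivation of a contradiction from $T$-axioms together with true $\Pi_1$-sentences, by replacing each occurrence of an ${\sf EA}$-axiom by a logically valid derivation of it from the corresponding $\pi_\phi$ (respectively from a bounded $\Pi_1$-variant, in the case of a basic axiom), each of which $S$ certifies to lie in $\Pi_1\text{-}{\rm Th}(\mathbb{N})$. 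This yields $S\vdash\Sigma_1\text{-}\textsf{Sound}(S)$, contradicting the previous paragraph, whence $T\nvdash\Sigma_1\text{-}\textsf{Sound}(T)$.

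The step I expect to be the real obstacle is this last formalization, and within it a single delicate point: that ${\sf EA}$ proves, uniformly in the code of a $\Delta_0(\exp)$-formula $\phi$, both that the induction axiom for $\phi$ is derivable from $\pi_\phi$ (a routine uniform manipulation of derivations) and that $\pi_\phi$ belongs to $\Pi_1\text{-}{\rm Th}(\mathbb{N})$, i.e.\ ${\sf EA}\vdash\forall\phi\,\Pi_1\text{-}\textsf{True}(\ulcorner\pi_\phi\urcorner)$. The latter is again where the presence of $\exp$ is used: ${\sf EA}$ has a well-behaved satisfaction predicate for $\Delta_0$-formulas, and relative to it the claim reduces to a bounded induction that ${\sf EA}$ can carry out. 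Everything else --- parsing conjunctions of axioms, splicing subderivations, and the trivial implication $\Sigma_1\text{-}\textsf{Sound}(S)\rightarrow\Sigma_1\text{-}\textsf{Sound}(T)$ --- is bookkeeping.
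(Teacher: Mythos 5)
Your reduction gets the easy half right (and it parallels the paper's key observation): since every axiom of ${\sf EA}$ follows logically from true $\Pi_1$-sentences once $\exp$ is in the language, $T\cup\Pi_1\text{-}{\rm Th}(\mathbb{N})$ and $S\cup\Pi_1\text{-}{\rm Th}(\mathbb{N})$ have the same deductive closure, so $S=T+{\sf EA}$ is $\Sigma_1$-sound and Theorem~\ref{CS009} (in its ${\sf EA}$ form) gives $S\nvdash\Sigma_1\text{-}\textsf{Sound}(S)$. The gap is exactly the step you yourself flag and then wave through: $S\vdash\Sigma_1\text{-}\textsf{Sound}(T)\rightarrow\Sigma_1\text{-}\textsf{Sound}(S)$. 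Since $T$ is arbitrary, this has to be carried out essentially in ${\sf EA}$, and it requires the \emph{formalized, uniform} statement that every coded axiom of ${\sf EA}$ (including nonstandard induction axioms appearing in a coded refutation) is a logical consequence of a $\Pi_1\text{-}\textsf{True}$ sentence, i.e.\ ${\sf EA}\vdash\forall\phi\,\Pi_1\text{-}\textsf{True}(\ulcorner\pi_\phi\urcorner)$. Your justification --- ``${\sf EA}$ has a well-behaved satisfaction predicate for $\Delta_0$-formulas and the claim reduces to a bounded induction'' --- does not work for the class you actually need, namely $\Delta_0(\exp)$-formulas: evaluating an arbitrary coded $\Delta_0(\exp)$-formula requires computing values of coded terms with (possibly nonstandardly) iterated $\exp$, i.e.\ superexponentiation, whose totality ${\sf EA}$ does not prove; hence the relevant satisfaction predicate is not of $\Delta_0(\exp)$ complexity over ${\sf EA}$, and the induction on $z$ needed to verify $\pi_\phi$ is induction on a $\Sigma_1/\Pi_1$ condition, beyond ${\sf EA}$'s $\Delta_0(\exp)$-induction. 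Even if you retreat to the exponential-free axiomatization ${\sf I\Delta_0+Exp}$, you still owe a substantive lemma (uniform ${\sf EA}$-provable truth of the $\Pi_1$-forms of all coded $\Delta_0$-induction axioms, via a truth predicate whose properties must be verified), which you assert but do not prove.

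The paper never faces this obstacle, because it does not apply Theorem~\ref{CS009} as a black box to $T+{\sf EA}$ and therefore never has to compare $\Sigma_1\text{-}\textsf{Sound}(S)$ with $\Sigma_1\text{-}\textsf{Sound}(T)$ inside a weak theory. It sets $T^\ast=T\cup\Pi_1\text{-}{\rm Th}(\mathbb{N})$, notes (purely in the metatheory, using the $\Pi_1$-axiomatizability of ${\sf EA}$ in the presence of $\exp$) that $T^\ast$ is consistent and contains ${\sf EA}$, and then reruns the diagonal argument of Theorem~\ref{CS008} with $T^\ast$ as the ambient prover and with $\textsf{ConjAx}_T$ throughout, the only formalized ingredient being Beklemishev's provable $\Sigma_2$-completeness over $\Pi_1\text{-}{\rm Th}(\mathbb{N})$; this yields $T^\ast\nvdash\Sigma_1\text{-}\textsf{Sound}(T)$, hence $T\nvdash\Sigma_1\text{-}\textsf{Sound}(T)$ directly. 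To repair your write-up, either follow that route (unprovability of $\Sigma_1\text{-}\textsf{Sound}(T)$ itself over $T^\ast$, no transfer lemma), or supply an actual proof of ${\sf EA}\vdash\forall\phi\,\Pi_1\text{-}\textsf{True}(\ulcorner\pi_\phi\urcorner)$ for the specific axiomatization of ${\sf EA}$ you add to $T$; as it stands, that step is a genuine gap.
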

\begin{proof}
Let $T^\ast\!=\!T\cup\Pi_1\text{-}{\rm Th}(\mathbb{N})$. By the
$\Sigma_1$-soundness of $T$ the theory $T^\ast$ is consistent and
contains ${\sf EA}$ (or equivalently ${\sf I\Delta_0+Exp}$ since
both ${\sf EA}$ and ${\sf I\Delta_0+Exp}$ are $\Pi_1$-axiomatizable
in the presence of ${\sf exp}$). So, Proposition~2.11
of~\cite{Rppa2005} implies that {$ T^\ast
\vdash\sigma\rightarrow\exists s,t,u\big[\textsf{ConjAx}_{T}(s)\wedge\Pi_1\text{-}\textsf{True}(t)\wedge\textsf{Proof}(u,\ulcorner s\wedge t\rightarrow\sigma\urcorner)\big]$} (for
any $\Sigma_2$ sentence $\sigma$). Thus, by an argument similar to
the previous theorem,
$T^\ast\not\vdash \Sigma_1\text{-}\textsf{Sound}(T)$ which  implies
$T\not\vdash\Sigma_1\text{-}\textsf{Sound}(T)$.
\end{proof}

\section{Optimality of the G\"odel's second incompleteness theorem}
In this section, we construct, for any $n>0$, a $\Sigma_{n+1}$-definable and $\Sigma_{n-1}$-sound theory $\mathfrak{T}$  such that $\mathfrak{T}\vdash\Sigma_{n-1}\textsf{-Sound\,}(\mathfrak{T})$. Fix a natural number ${n}>0$ throughout this section unless otherwise noted.
The formula $\textsf{Seq}(m)$ says that $m$ is the code of a sequence of formulas, and  the length of this sequence is denoted by $\ell(m)$, and for any number $l\!<\!\ell(m)$ the $l^{\rm th}$ member of $m$ is denoted by $[m]_l$. A sequence $m$ is thus $\langle[m]_0,[m]_1,\cdots,[m]_{\ell(m)-1}\rangle$.

\begin{definition}
Fix an enumeration $\chi_0,\chi_1,
    \chi_2,\cdots$ of all the formulas such that (by the convention)  $\chi_0=\textsf{Con}(T_0)$, where $T_0=\textsf{PA} \cup\Pi_{{n}-1}\textrm{-Th}(\mathbb{N})$. We construct $\mathfrak{T}$ by recursions.
\begin{eqnarray*}
\begin{array}{rcl}
T_{0}&=&\textsf{PA}\cup\Pi_{{n}-1}\textrm{-Th}(\mathbb{N});\\
T_{i+1}&=&
\begin{cases}
T_{i}+\chi_{i}       &\qquad\qquad \text{if}~T_i\text{ is consistent with }\chi_i,\\
T_{i}+\neg\chi_i     &\qquad\qquad \textrm{otherwise;}
\end{cases}
\\ \mathfrak{T}&=&\bigcup_{i\in\mathbb{N}}T_{i}.
\end{array}
\end{eqnarray*}
\end{definition}
We will show that $\mathfrak{T}$ is the desired theory in four steps.
\begin{lemma}\label{CS023}
Let $\mathfrak{T}$ be defined as above, then
\begin{enumerate}
  \item $\mathfrak{T}$ is consistent and $\Sigma_{n-1}$-sound;
  \item $\mathfrak{T}$ is $\Sigma_{n+1}$-definable.
\end{enumerate}
\end{lemma}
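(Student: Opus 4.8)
The plan is to verify each of the two assertions separately, working directly from the recursive construction of the chain $T_0\subseteq T_1\subseteq T_2\subseteq\cdots$ whose union is $\mathfrak{T}$.

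\textbf{Step 1: consistency and $\Sigma_{n-1}$-soundness.} First I would observe that $T_0=\textsf{PA}\cup\Pi_{n-1}\text{-Th}(\mathbb{N})$ is consistent (this is just the $\Sigma_{n-1}$-soundness of $\textsf{PA}$, which holds because $\textsf{PA}$ is sound) and moreover $\Sigma_{n-1}$-sound, i.e.\ $T_0\cup\Pi_{n-1}\text{-Th}(\mathbb{N})=T_0$ is consistent. Next I would prove by induction on $i$ that each $T_i$ is consistent: this is immediate from the construction, since at stage $i+1$ we add $\chi_i$ only if $T_i+\chi_i$ is consistent, and otherwise we add $\neg\chi_i$, and if $T_i$ is consistent then at least one of $T_i+\chi_i$, $T_i+\neg\chi_i$ is consistent. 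Consistency of $\mathfrak{T}$ then follows because any proof of a contradiction from $\mathfrak{T}$ uses only finitely many axioms, hence lies inside some $T_i$, which is consistent. For $\Sigma_{n-1}$-soundness of $\mathfrak{T}$, the key point is that $\Pi_{n-1}\text{-Th}(\mathbb{N})\subseteq T_0\subseteq\mathfrak{T}$, so $\mathfrak{T}\cup\Pi_{n-1}\text{-Th}(\mathbb{N})=\mathfrak{T}$, which we have just shown is consistent; by the stated equivalence (the $\Sigma_{n-1}$-soundness of a theory is equivalent to its consistency with $\Pi_{n-1}\text{-Th}(\mathbb{N})$), this gives $\Sigma_{n-1}$-soundness of $\mathfrak{T}$.

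\textbf{Step 2: $\Sigma_{n+1}$-definability.} Here I would unwind what it means for $\chi$ to be an axiom of $\mathfrak{T}$. A formula $\chi$ is (a conjunct appearing as) an axiom of $\mathfrak{T}$ iff either $\chi\in T_0$, or $\chi=\chi_i$ and $T_i$ is consistent with $\chi_i$, or $\chi=\neg\chi_i$ and $T_i$ is \emph{not} consistent with $\chi_i$ (equivalently $T_i\vdash\neg\chi_i$), for some $i$. Membership in $T_0=\textsf{PA}\cup\Pi_{n-1}\text{-Th}(\mathbb{N})$ is $\Pi_{n-1}$ (the $\textsf{PA}$-part is $\Delta_0$ and the truth predicate for $\Pi_{n-1}$-sentences is $\Pi_{n-1}$). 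The condition ``$T_i$ is consistent with $\chi_i$'' is a statement of the form $\textsf{Con}(\cdots)$, which is $\Pi_1$ \emph{relative to} a definition of the axioms of $T_i$; and ``$T_i\vdash\neg\chi_i$'' is $\Sigma_1$ relative to such a definition. The real work is to get a uniform bound on the complexity of the predicate ``$x$ is an axiom of $T_i$'' as $i$ varies, and to see that this feeds through the consistency/provability statements to land at $\Sigma_{n+1}$ overall. I would argue that the set of axioms of $T_i$ is, uniformly in $i$, definable by a formula whose complexity is controlled: each $T_i$ differs from $T_0$ by the addition of finitely many sentences $\pm\chi_0,\dots,\pm\chi_{i-1}$, and which sign is chosen is itself governed by consistency statements about earlier $T_j$'s. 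Because consistency-with-a-$\Pi_{n-1}$-oracle statements are $\Pi_n$ and their negations are $\Sigma_n$, a careful induction (or a direct simultaneous definition) shows ``$\chi$ is an axiom of $\mathfrak{T}$'' can be written with one existential quantifier over the stage $i$ in front of a boolean combination of $\Pi_n$ and $\Sigma_n$ conditions, hence is $\Sigma_{n+1}$. I would also note that the abstract already advertises $\mathfrak{T}$ as complete $\Delta_{n+1}$-definable, which is consistent with a $\Sigma_{n+1}$ bound here (completeness plus $\Sigma_{n+1}$-definability of the axioms gives $\Delta_{n+1}$-definability of the theory), so a $\Sigma_{n+1}$ upper bound is exactly what is needed.

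\textbf{Main obstacle.} The first part is essentially a bookkeeping argument and presents no difficulty. The delicate point is the complexity computation in Step 2: one must pin down that the ``otherwise'' branch (adding $\neg\chi_i$ when $T_i$ is inconsistent with $\chi_i$) does not cause the quantifier complexity to blow up across the recursion, since na\"\i vely each stage's definition refers to a consistency statement about the previous stage, threatening an unbounded alternation of quantifiers as $i$ grows. The resolution is that all these consistency statements are about theories extending $T_0\supseteq\Pi_{n-1}\text{-Th}(\mathbb{N})$ by finitely many sentences, so each is $\Pi_n$ (consistency is $\Pi_1$ over a $\Pi_{n-1}$-definable base, and $\Pi_1(\Pi_{n-1})\subseteq\Pi_n$), uniformly in $i$; collecting the stage index into a single leading existential quantifier then yields the $\Sigma_{n+1}$ bound without further alternations. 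Making this uniformity precise — exhibiting one formula, with parameter $i$, of the right complexity that defines the axioms of $T_i$ for all $i$ simultaneously — is the crux of the proof.
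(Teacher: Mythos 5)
Your Step 1 is fine and matches what the paper dismisses as trivial, and your intended route for Step 2 (a single leading existential quantifier in front of consistency conditions that are uniformly $\Pi_n$ because they concern $T_0$ plus finitely many explicit sentences) is exactly the idea behind the paper's proof. The problem is that you stop precisely at the step that constitutes the proof: you never exhibit the uniform formula, and you yourself flag it as ``the crux.'' Moreover, the shape you propose is not quite right: an existential quantifier over the stage index $i$ alone cannot work, because to express which of $\chi_j,\neg\chi_j$ was adopted at the stages $j<i$ you must either refer to ``the axioms of $T_j$'' as a defined predicate (which nests consistency statements inside consistency statements and makes the formula grow with $i$ --- exactly the blow-up you identify) or quantify over a finite object that records those choices. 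The second option is what the paper does, and it is the missing device in your proposal.

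Concretely, the paper existentially quantifies over a sequence $y$ coding the outcomes of all stages up to the current one, with the local correctness condition $\textsf{Compl}(y)$: $y$ is a sequence and for every $j<\ell(y)$, either $\textsf{Con}_{T_0}\big(\chi_j\wedge\bigwedge_{i<j}[y]_i\big)$ and $[y]_j=\ulcorner\chi_j\urcorner$, or $\neg\textsf{Con}_{T_0}\big(\chi_j\wedge\bigwedge_{i<j}[y]_i\big)$ and $[y]_j=\ulcorner\neg\chi_j\urcorner$; then $\textsf{Axiom}_{\mathfrak{T}}(x)=\textsf{Axiom}_{T_0}(x)\vee\exists y\big(\textsf{Compl}(y)\wedge x=[y]_{\ell(y)-1}\big)$. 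The point is that each consistency statement mentions only $T_0$ and the explicit conjunction of the \emph{earlier entries of $y$}, never an axiom predicate for $T_j$, so all of them are $\Pi_n$ uniformly; the matrix of $\textsf{Compl}(y)$ is a bounded universal quantifier over a Boolean combination of $\Pi_n$ and $\Sigma_n$ formulas, which stays within $\Sigma_{n+1}$, and the outer $\exists y$ keeps it $\Sigma_{n+1}$. Supplying this sequence-coded definition (and the small check that the bounded quantifier does not raise the level) is what your proposal lacks; without it the ``careful induction'' you invoke has no single formula with parameter $i$ to induct on.
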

\begin{proof}
(1) is trivial, and so we just prove (2). Let $\textsf{Con}_{T_0}(x)$ be defined as
$$\forall s,t,u\big[\textsf{ConjAx}_\textsf{PA} (s)\wedge\Pi_{{n}-1}\text{-}\textsf{True}(t)\rightarrow\neg\textsf{Proof\,}(u,\ulcorner s\wedge t\wedge x \rightarrow\bot\urcorner)\big],$$
and put $\textsf{Compl\,}(y)$, meaning that $y$ is a (partial) completion of $T_0$, be the formula
$$\textsf{Seq}(y)\wedge\forall j\!<\!\ell(y)\Big[\big[
\textsf{Con}_{T_0}\big(\chi_j\wedge
\bigwedge_{i< j}(y)_i
\big)\wedge[y]_j
{=}\ulcorner\chi_j\urcorner\big]
 \vee\big[
\neg\textsf{Con}_{T_0}(\chi_j\wedge\bigwedge_{i< j}(y)_i
)\wedge[y]_j{=}\ulcorner\neg
\chi_j\urcorner
\big]\Big].$$
Then the theory $\mathfrak{T}$ is definable by the following $\Sigma_{{n}+1}$-formula
$$\textsf{Axiom}_{\mathfrak{T}}(x)=_{df}\textsf{Axiom}_{T_0}(x)\vee\exists y\big(\textsf{Compl\,}(y)\wedge x{=}[y]_{\ell(y)-1}\big),$$
where $\textsf{Axiom}_{T_0}(x)=_{df}\textsf{Axiom}_\textsf{PA} (x)\vee\Pi_{{n}-1}\text{-}\textsf{True}(x)$.
\end{proof}

\begin{lemma}\label{CS013}
$\textsf{PA} +\Sigma_{{n}-1}\text{-}\textsf{Sound\,}(\textsf{PA} )\vdash\forall z\exists ! y \big(\textsf{Compl\,}(y)\wedge [y]_{\ell(y)-1}\!\in\!\{\ulcorner\!
\chi_z\!\urcorner,
\ulcorner\!\neg\chi_z\!\urcorner\}\big)$.
\end{lemma}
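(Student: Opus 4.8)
The plan is to reason inside $\textsf{PA}+\Sigma_{n-1}\text{-}\textsf{Sound}(\textsf{PA})$ — equivalently, with $\textsf{Con}(T_0)$ for $T_0=\textsf{PA}\cup\Pi_{n-1}\text{-}\mathrm{Th}(\mathbb{N})$ available as an axiom — and to run an induction on $z$ that literally imitates the recursion defining $\mathfrak{T}$. It is cleaner to establish the equivalent form $\forall z\,\exists!\,y\,\big(\textsf{Compl}(y)\wedge\ell(y)=z+1\big)$: $\textsf{Compl}(y)$ already forces $[y]_{\ell(y)-1}$ to be $\ulcorner\chi_{\ell(y)-1}\urcorner$ or $\ulcorner\neg\chi_{\ell(y)-1}\urcorner$, so membership of $[y]_{\ell(y)-1}$ in $\{\ulcorner\chi_z\urcorner,\ulcorner\neg\chi_z\urcorner\}$ is, given that the fixed enumeration is injective and lists no formula together with its negation (e.g.\ lists only the negation-free formulas, which still suffices to complete $T_0$), the same as $\ell(y)=z+1$. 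The high quantifier complexity of $\textsf{Compl}$ (it invokes the $\Pi_n$-formula $\textsf{Con}_{T_0}$), hence of the formula being inducted on, is irrelevant, as $\textsf{PA}$ has unrestricted induction.

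First I would isolate the ``locality'' of $\textsf{Compl}$, provable in $\textsf{PA}$ by unwinding the definition: for a sequence $y$ and $j<\ell(y)$, whether the $j$-th disjunct of $\textsf{Compl}(y)$ holds is a function of $\chi_j$ and of $[y]_0,\dots,[y]_{j-1}$ only. Two consequences: (a) if $\textsf{Compl}(y)$ and $y'$ is the length-$k$ initial segment of $y$ ($k\le\ell(y)$) then $\textsf{Compl}(y')$; and (b) if $\textsf{Compl}(y')$ with $\ell(y')=k$, then, setting $e=\ulcorner\chi_k\urcorner$ when $\textsf{Con}_{T_0}\!\big(\chi_k\wedge\bigwedge_{i<k}(y')_i\big)$ holds and $e=\ulcorner\neg\chi_k\urcorner$ otherwise, the one-step extension $y'{}^{\frown}\langle e\rangle$ satisfies $\textsf{Compl}$.

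Granting (a) and (b), the induction on $z$ is routine. For $z=0$, excluded middle on $\textsf{Con}_{T_0}(\ulcorner\chi_0\urcorner)$ gives the required sequence $\langle\ulcorner\chi_0\urcorner\rangle$ or $\langle\ulcorner\neg\chi_0\urcorner\rangle$, and uniqueness among length-$1$ sequences satisfying $\textsf{Compl}$ is read off the defining disjunction. For the step, let $y$ be the unique length-$(z+1)$ such sequence; excluded middle on $\textsf{Con}_{T_0}\!\big(\chi_{z+1}\wedge\bigwedge_{i\le z}(y)_i\big)$ together with (b) yields a length-$(z+2)$ sequence $y^{\frown}\langle e\rangle$ satisfying $\textsf{Compl}$. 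For uniqueness, let $\widetilde y$ be any length-$(z+2)$ sequence with $\textsf{Compl}(\widetilde y)$; by (a) its length-$(z+1)$ initial segment again satisfies $\textsf{Compl}$, hence equals $y$ by the induction hypothesis, so $[\widetilde y]_0,\dots,[\widetilde y]_z=[y]_0,\dots,[y]_z$; then the $\textsf{Compl}$-clause at index $z+1$ determines $[\widetilde y]_{z+1}$ from precisely the data that fixed $e$, whence $\widetilde y=y^{\frown}\langle e\rangle$.

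I expect the only real work to be bookkeeping: pinning down the coding of finite sequences ($\ell$, $[\cdot]_i$, initial segment, one-step extension) and of the iterated conjunction $\bigwedge_{i<j}(y)_i$ fed to $\textsf{Con}_{T_0}$, checking in $\textsf{PA}$ that all these objects exist for every input (using totality of exponentiation), and verifying the locality statement verbatim. The one genuinely logical move — using excluded middle to ``decide'' the $\Pi_n$-predicate $\textsf{Con}_{T_0}$ at each stage, so that the recursion never stalls — is trivial; in particular the soundness hypothesis is merely carried along (it is used, as $\textsf{Con}(T_0)$, for the accompanying facts about $\mathfrak{T}$ rather than here).
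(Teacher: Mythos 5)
Your proposal is correct and follows essentially the same route as the paper's proof: working inside $\textsf{PA}+\textsf{Con}(T_0)$, existence by the stepwise extension deciding $\textsf{Con}_{T_0}$ with excluded middle at each stage, and uniqueness by induction using initial segments, with your ``locality'' facts (a) and (b) just making explicit what the paper uses tacitly. Your two refinements are also sound: identifying ``$[y]_{\ell(y)-1}\in\{\ulcorner\chi_z\urcorner,\ulcorner\neg\chi_z\urcorner\}$'' with ``$\ell(y)=z+1$'' (under a mild assumption on the enumeration) pins down a point the paper's uniqueness argument silently assumes, and you are right that the hypothesis $\Sigma_{n-1}\text{-}\textsf{Sound}(\textsf{PA})$ is not actually needed here --- the paper's appeals to Lemma~\ref{CS005} serve the consistency invariant used later in Lemma~\ref{CS012}, not the existence or uniqueness of the completion sequence.
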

\begin{proof}[Proof]
Reason inside $\textsf{PA} +\Sigma_{{n}-1}\text{-}\textsf{Sound\,}(\textsf{PA} )=\textsf{PA} +\textsf{Con}(T_0)$. The existence of $y$ will be proved by induction on $z$.
\begin{itemize}
  \item For $z=0$, put $y=\langle\chi_0\rangle$ if $\textsf{Con}_{T_0}(\ulcorner\chi_0\urcorner)$ and  $y=\langle\neg\chi_0\rangle$ if $\neg\textsf{Con}_{T_0}(\ulcorner\chi_0\urcorner)$; note that by Lemma~\ref{CS005} we have $\textsf{Con}_{T_0}(\ulcorner\neg\chi_0\urcorner)$ in the latter case.
  \item Now, if   $\textsf{Compl\,}(y)\!\wedge\![y]_{\ell(y)-1}
\!\in\!\{\ulcorner\!\chi_z\!\urcorner,
\ulcorner\!\neg\chi_z\!\urcorner\}$  then
put $y'=y\hat{~}\langle\chi_{z+1}\rangle$ if $\textsf{Con}_{T_0+S}(\ulcorner\chi_{z+1}\urcorner)$ and
$y'=y\hat{~}\langle\neg\chi_{z+1}\rangle$ if $\neg\textsf{Con}_{T_0+S}(\ulcorner\chi_{z+1}\urcorner)$, where $S$ is the set  $\{[y]_0,\cdots,[y]_z\}$ and{~ }$\hat{~}${~ }denotes the concatenation operation. Note again that by Lemma~\ref{CS005}, $\textsf{Con}_{T_0+S}(\ulcorner\neg\chi_{z+1}\urcorner)$ in the latter case. It can then be easily seen that 
$\textsf{Compl\,}(y')\wedge[y']_{\ell(y')-1}\!\in\!
\{\ulcorner\!\chi_{z+1}\!\urcorner,
\ulcorner\!\neg\chi_{z+1}\!\urcorner\}$.
\end{itemize}
The uniqueness of $y$ will again be proved by induction on $z$.
\begin{itemize}
  \item For $z=0$, if for some $y$ and $y'$ we have
  $\textsf{Compl\,}(y)\wedge[y]_{\ell(y)-1}
\!\in\!\{\ulcorner\!\chi_0\!\urcorner,
\ulcorner\!\neg\chi_0\!\urcorner\}$ and $\textsf{Compl\,}(y')\wedge[y']_{\ell(y')-1}
\!\in\!\{\ulcorner\!\chi_0\!\urcorner,
\ulcorner\!\neg\chi_0\!\urcorner\}$
  then if $y\neq y'$ we should have either  $[y]_0{=}\ulcorner\chi_0\urcorner$, $[y']_0{=}\ulcorner\neg\chi_0\urcorner$ or   $[y]_0{=}\ulcorner\neg\chi_0\urcorner$, $[y']_0{=}\ulcorner\chi_0\urcorner$. Then  we must have $\textsf{Con}_{T_0}(\ulcorner\chi_0\urcorner)\wedge\neg\textsf{Con}_{T_0}(\ulcorner\chi_0\urcorner)$ in both cases; contradictions.
\item For $z+1$ assume that both $\textsf{Compl\,}(y) \wedge [y]_{\ell(y)-1}
\!\in\!\{\ulcorner\!\chi_{z+1}\!\urcorner,
\ulcorner\!\neg\chi_{z+1}\!\urcorner\}$ and $\textsf{Compl\,}(y') \wedge [y']_{\ell(y')-1}
\!\in\!\{\ulcorner\!\chi_{z+1}\!\urcorner,
\ulcorner\!\neg\chi_{z+1}\!\urcorner\}$ hold. Then, if
for a sequence $s$ we denote $\langle [s]_0,\cdots,[s]_{j-1}\rangle$ by $s\downharpoonright j$,
 $\textsf{Compl\,}\big(y\!\downharpoonright\!(z+1)\big)\wedge\textsf{Compl\,}\big(y'\!\downharpoonright\!(z+1)\big)$ holds, and so by the induction hypothesis   $y\!\downharpoonright\!(z+1){=}y'\!\downharpoonright\!(z+1)$.
 Thus it remains to show that $[y]_{z+1}{=}[y']_{z+1}$. If $[y]_{z+1}{\neq}[y']_{z+1}$ then
either $[y]_{z+1}{=}\ulcorner\chi_{z+1}\urcorner$, $[y']_{z+1}{=}\ulcorner\neg\chi_{z+1}\urcorner$  or $[y]_{z+1}{=}\ulcorner\neg\chi_{z+1}\urcorner$, $[y']_{z+1}{=}\ulcorner\chi_{z+1}\urcorner$, and then, just like before,
  $\textsf{Con}_{T_0}\big(\ulcorner\chi_{z+1}\wedge
\bigwedge_{i\leqslant z}[y]_i\urcorner\big)\wedge\neg\textsf{Con}_{T_0}\big(\ulcorner\chi_{z+1}\wedge
\bigwedge_{i\leqslant z}[y']_i\big\urcorner)$
should hold in both cases;
contradiction with $y\!\downharpoonright\!(z+1){=}y'\!\downharpoonright\!(z+1)$. \qedhere
\end{itemize}
\end{proof}

\begin{lemma}\label{CS012}
$\textsf{PA} +\Sigma_{{n}-1}\text{-}\textsf{Sound\,}(\textsf{PA} )\vdash\Sigma_{{n}-1}\text{-}\textsf{Sound\,}(\mathfrak{T})$.
\end{lemma}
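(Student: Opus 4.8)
The plan is to work inside $\textsf{PA}+\Sigma_{n-1}\text{-}\textsf{Sound}(\textsf{PA})$, which by the earlier remark is the same as $\textsf{PA}+\textsf{Con}(T_0)$, and to show directly that no $\Sigma_{n-1}$-sentence $\sigma$ can be false while being provable from a finite conjunction of axioms of $\mathfrak{T}$ together with a true $\Pi_{n-1}$-sentence. The key point is that any finite set of axioms of $\mathfrak{T}$ is contained in $T_0$ together with finitely many of the $\chi_j$'s or $\neg\chi_j$'s, and by Lemma~\ref{CS013} the choice of which of $\chi_j,\neg\chi_j$ lies in $\mathfrak{T}$ is governed by the (provably total and unique) completion function $y$. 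So the first step is: given a witness $s$ with $\textsf{ConjAx}_\mathfrak{T}(s)$ and a witness $t$ with $\Pi_{n-1}\text{-}\textsf{True}(t)$ and a proof $u$ of $s\wedge t\to\bot$, extract from $s$ the largest index $z$ of a $\chi$ involved, invoke Lemma~\ref{CS013} to get the unique completion sequence $y$ with $[y]_{\ell(y)-1}\in\{\ulcorner\chi_z\urcorner,\ulcorner\neg\chi_z\urcorner\}$, and observe that every conjunct of $s$ other than the $T_0$-axioms is one of the $[y]_i$ for $i\le z$.

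Next I would push the inconsistency back down the completion sequence. Writing $S_z=\{[y]_0,\dots,[y]_z\}$, the existence of $s,t,u$ as above gives $\neg\textsf{Con}_{T_0}\big(\bigwedge_{i\le z}[y]_i\wedge t\big)$, i.e. $\neg\textsf{Con}_{T_0+S_z}(t)$ where $t$ is a true $\Pi_{n-1}$-sentence. But the construction of $y$ in Lemma~\ref{CS013} guarantees that at each stage the conjunct $[y]_j$ was chosen precisely so that $\textsf{Con}_{T_0+S_{j-1}}([y]_j)$ holds (using Lemma~\ref{CS005} in the $\neg\chi_j$ case to know the negation is consistent when $\chi_j$ isn't). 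So by downward induction on $j\le z$ — more precisely, by formalizing "$\textsf{Con}_{T_0+S_j}(t)$" and running induction — I would show $\textsf{Con}_{T_0+S_j}(t)$ for all $j\le z$, contradicting the case $j=z$. The base case is $\textsf{Con}_{T_0}(t)$, which is exactly $\textsf{Con}(T_0)$ unwound, since $t$ is a conjunction of true $\Pi_{n-1}$-sentences and $\Sigma_{n-1}\text{-}\textsf{Sound}(\textsf{PA})$ says $T_0=\textsf{PA}+\Pi_{n-1}\text{-}\textsf{Th}(\mathbb{N})$ is consistent; the inductive step $\textsf{Con}_{T_0+S_j}(t)\Rightarrow\textsf{Con}_{T_0+S_{j+1}}(t)$ is where the defining clause of $\textsf{Compl}$ is used, together with the observation that adding a sentence consistent with a theory $U$ keeps $U$ consistent with any further sentence already consistent with $U$ — this requires a small amount of care about whether "consistent with $\chi_{j+1}$" and "consistent with $t$" can be combined, which is handled again via Lemma~\ref{CS005}-style reasoning applied to $T_0+S_j$.

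The main obstacle I expect is bookkeeping inside $\textsf{PA}$: the completion sequence $y$ is only provably unique once we have $\textsf{Con}(T_0)$ (Lemma~\ref{CS013}), but the "conjunction of axioms" witness $s$ might in principle present the $\mathfrak{T}$-axioms in a scrambled way, so I must argue in $\textsf{PA}$ that $\textsf{ConjAx}_\mathfrak{T}(s)$ together with the definition of $\textsf{Axiom}_\mathfrak{T}$ forces each non-$T_0$ conjunct to equal $[y]_i$ for the canonical $y$ — i.e. I need that the local existential witnesses $y$ appearing in $\textsf{Axiom}_\mathfrak{T}(x)$ for different conjuncts $x$ are all initial segments of one another, which follows from the uniqueness half of Lemma~\ref{CS013}. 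Once that coherence is established, the downward induction argument is routine. The second, milder obstacle is making sure the induction in the inductive step stays within $\Sigma_{n-1}$-soundness rather than requiring more: since $\textsf{Con}_{T_0+S_j}(\psi)$ is a $\Pi_n$-statement in the code of $\psi$ and the step only manipulates such statements and true $\Pi_{n-1}$-sentences, ordinary $\textsf{PA}$-induction over the finite range $j\le z$ suffices, with no appeal to soundness beyond the single hypothesis $\textsf{Con}(T_0)$.
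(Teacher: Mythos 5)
Your proposal is correct and follows essentially the same route as the paper's proof: both use the uniqueness part of Lemma~\ref{CS013} to place all non-$T_0$ conjuncts on a single completion sequence, then derive $\textsf{Con}_{T_0}$ of the corresponding conjunction from $\textsf{Compl}$ together with $\textsf{Con}(T_0)$ (via Lemma~\ref{CS005}-style reasoning), the true $\Pi_{n-1}$-witness $t$ being absorbed into the $\Pi_{n-1}\text{-}\textsf{True}$ quantifier of $\textsf{Con}_{T_0}$. The only difference is presentational: you spell out as an explicit induction along the completion sequence what the paper compresses into the single assertion that $\textsf{Compl}(u)$ yields $\textsf{Con}_{T_0}(\ulcorner\bigwedge_{i<\ell(u)}[u]_i\urcorner)$.
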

\begin{proof}[Proof]
Reason inside $\textsf{PA} +\Sigma_{{n}-1}\text{-}\textsf{Sound\,}(\textsf{PA} )=\textsf{PA} +\textsf{Con}(T_0)$. Take $\langle\psi_0,\cdots,\psi_l\rangle$ to be any sequence of the axioms of $T_0$ and $\langle\vartheta_0,\cdots,\vartheta_k\rangle$ to  be any sequence of formulas for which there are $\langle y_0,\cdots,y_k\rangle$ such that $\bigwedge_{i\leqslant k}\big[\textsf{Compl\,}(y_i) \wedge [y_i]_{\ell(y_i)-1}
{=}\,\ulcorner\!\vartheta_i\!\urcorner\big]$. By Lemma~\ref{CS013} all $y_i$'s are in initial segments of $u=\max\{y_0,\cdots,y_k\}$. So, all  $\vartheta_i$'s appear in the list  $[u]_0,\cdots,[u]_{\ell(u)-1}$. It follows from $\textsf{Compl\,}(u)$ that $\textsf{Con}_{T_0}(\ulcorner \bigwedge_{i<\ell(u)}[u]_i\urcorner)$, hence  we have $\textsf{Con}_{T_0}(\ulcorner\bigwedge_{i\leqslant k}\vartheta_i\urcorner)$, so  $\textsf{Proof\,}(u,\ulcorner \bigwedge_{i\leqslant k}\vartheta_i\wedge \bigwedge_{j\leqslant l}\psi_j\rightarrow\bot\urcorner)$ can hold for no $u$. Now, since any sequence of the axioms of $\mathfrak{T}$ can be rearranged as $\langle\psi_0,\cdots,\psi_l,\vartheta_0,\cdots,\vartheta_k\rangle$ where $\psi_j$'s and $\vartheta_i$'s are as above, $\textsf{Con}(\mathfrak{T})$ holds.

\noindent Therefore, $\textsf{PA} +\Sigma_{{n}-1}\text{-}\textsf{Sound\,}(\textsf{PA} )\vdash\textsf{Con}(\mathfrak{T})$, and then our conclusion follows from the fact that $\Sigma_{{n}-1}\text{-}\textsf{Sound\,}(\mathfrak{T})=_{df}\textsf{Con}(\mathfrak{T}+\Pi_{n-1}\textrm{-Th}(\mathbb{N}))=\textsf{Con}(\mathfrak{T})$ since $\mathfrak{T}+\Pi_{n-1}\textrm{-Th}(\mathbb{N})=\mathfrak{T}$.
\end{proof}

\begin{theorem}\label{CS015}
For any $n  \geq 1$, there exists a $\Delta_{n +1}$-definable and $\Sigma_{n -1}$-sound theory $\mathfrak{T}$ which proves self $\Sigma_{n -1}$-soundness:
$\mathfrak{T}\vdash\Sigma_{n -1}\text{-}\textsf{Sound\,}(\mathfrak{T})$.
\end{theorem}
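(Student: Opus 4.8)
The plan is to read the theorem off the three lemmas already in hand. By Lemma~\ref{CS023}, $\mathfrak{T}$ is consistent, $\Sigma_{n-1}$-sound, and $\Sigma_{n+1}$-definable, so what remains is to see that $\mathfrak{T}$ is complete and $\Delta_{n+1}$-definable and that $\mathfrak{T}\vdash\Sigma_{n-1}\text{-}\textsf{Sound\,}(\mathfrak{T})$. I would do the soundness part first, since that is where the design of the recursion is used. Because $\textsf{PA}$ is sound and every axiom of $T_0=\textsf{PA}\cup\Pi_{n-1}\textrm{-Th}(\mathbb{N})$ is true, $\mathbb{N}\vDash T_0$; hence $T_0$ is consistent, so $\mathbb{N}\vDash\textsf{Con}(T_0)$, and therefore $\mathbb{N}\vDash T_0+\chi_0$ because $\chi_0=\textsf{Con}(T_0)$ by the convention of the construction. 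In particular $T_0+\chi_0$ is consistent, so the recursion sets $T_1=T_0+\chi_0$, whence $\chi_0\in\mathfrak{T}$, that is, $\mathfrak{T}\vdash\textsf{Con}(T_0)$. Since $\textsf{Con}(T_0)$ is exactly $\Sigma_{n-1}\text{-}\textsf{Sound\,}(\textsf{PA})$ and $\mathfrak{T}\supseteq\textsf{PA}$, the theory $\mathfrak{T}$ proves all of $\textsf{PA}+\Sigma_{n-1}\text{-}\textsf{Sound\,}(\textsf{PA})$, and then Lemma~\ref{CS012} gives $\mathfrak{T}\vdash\Sigma_{n-1}\text{-}\textsf{Sound\,}(\mathfrak{T})$, as desired.

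For completeness and $\Delta_{n+1}$-definability: from the recursion each $T_{i+1}$ decides $\chi_i$, and since $\chi_0,\chi_1,\dots$ lists all formulas, and in particular all sentences, $\mathfrak{T}$ is complete; it is consistent by Lemma~\ref{CS023}. Provability from the $\Sigma_{n+1}$-definable axiom set $\textsf{Axiom}_{\mathfrak{T}}$ is again $\Sigma_{n+1}$ (an existential quantifier over a proof code whose side condition is a bounded conjunction of instances of $\textsf{Axiom}_{\mathfrak{T}}$, and such a bounded conjunction keeps a $\Sigma_{n+1}$ predicate $\Sigma_{n+1}$), so $\{\phi:\mathfrak{T}\vdash\phi\}$ is $\Sigma_{n+1}$; by completeness and consistency this set coincides, on sentences, with the complement of the $\Sigma_{n+1}$ set $\{\phi:\mathfrak{T}\vdash\neg\phi\}$, hence is $\Pi_{n+1}$ too, i.e.\ $\Delta_{n+1}$. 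Alternatively one can exhibit a $\Pi_{n+1}$ formula for the very set $\textsf{Axiom}_{\mathfrak{T}}$ defines: by the existence-and-uniqueness clause of Lemma~\ref{CS013}, which holds in $\mathbb{N}$ since $\mathbb{N}\vDash\textsf{Con}(T_0)$, the code of $\chi_i$ is an axiom of $\mathfrak{T}$ precisely when no completion sequence ends in the code of $\neg\chi_i$, and this is a $\Pi_{n+1}$ condition because $\textsf{Compl\,}\in\Delta_{n+1}$. Either way $\mathfrak{T}$ is a complete, $\Delta_{n+1}$-definable, $\Sigma_{n-1}$-sound theory, and together with the first paragraph this proves the theorem; by (the argument of) Lemma~\ref{CS007}, the sentence $\Sigma_{n-1}\text{-}\textsf{Sound\,}(\mathfrak{T})$ is insensitive over $\textsf{PA}$ to replacing one of these co-extensive presentations by the other, so the soundness claim does not depend on which one is fixed.

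The routine portions are the complexity bookkeeping ($\textsf{Compl\,}\in\Delta_{n+1}$ and that the bounded quantifiers in it, and in the provability predicate, do not raise the level) and the remark, already used for Lemma~\ref{CS023}(1), that any consistent theory containing $\Pi_{n-1}\textrm{-Th}(\mathbb{N})$ is automatically $\Sigma_{n-1}$-sound. The one genuinely delicate point is that $\chi_0$ really enters $T_1$: this is not furnished by G\"odel's second theorem for $T_0$, which yields only $T_0\nvdash\textsf{Con}(T_0)$, and it is exactly why the construction sets $\chi_0=\textsf{Con}(T_0)$ and why the standard-model witness for the consistency of $T_0+\textsf{Con}(T_0)$ is load-bearing rather than decorative.
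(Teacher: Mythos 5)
Your proposal is correct and takes essentially the same route as the paper: since $\chi_0=\textsf{Con}(T_0)$ is true in $\mathbb{N}$ it enters $T_1$, so $\mathfrak{T}\supseteq\textsf{PA}+\Sigma_{n-1}\text{-}\textsf{Sound\,}(\textsf{PA})$ and Lemma~\ref{CS012} yields self-soundness, while completeness of $\mathfrak{T}$ (equivalently, the direct $\Pi_{n+1}$ characterization via $\textsf{Compl\,}$ and Lemma~\ref{CS013}) gives $\Delta_{n+1}$-definability, exactly as in the paper's proof. The only superfluous element is your closing appeal to Lemma~\ref{CS007} for insensitivity to the presentation, which the paper does not need because Lemma~\ref{CS012} already establishes the soundness statement for the fixed axiomatization $\textsf{Axiom}_{\mathfrak{T}}$ (and, if one did want that remark, it is better justified via the provable-in-$\mathfrak{T}$ existence-and-uniqueness clause of Lemma~\ref{CS013} than via the Craig-trick argument of Lemma~\ref{CS007}).
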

\begin{proof}[Proof]
The theory $\mathfrak{T}$ constructed above is $\Sigma_{n +1}$-definable, and since it is complete, it must be $\Pi_{n +1}$-definable as well. To see it more directly, note that for all $j\in \mathbb{N}$
$$ \chi_j \in {\mathfrak{T}} \Longleftrightarrow  \mathbb{N}\vDash\textsf{Axiom}_{T_0}(\ulcorner \chi_j \urcorner) \vee \forall y(\textsf{Compl\,}(y) \wedge j\!<\!\ell(y)\rightarrow \ulcorner \chi_j \urcorner {=} [y]_{j}).$$
 Since $\chi_0=\textsf{Con}(T_0)$ is consistent with $T_0$ (i.e. $\mathbb{N}\vDash \textsf{Con}_{T_0}(\ulcorner \chi_0\urcorner)$), then $\chi_0=\textsf{Con}(T_0)\in T_1$, and so $\mathfrak{T}\vdash\textsf{Con}(T_0)$.
 Therefore, noting that $\Sigma_{n-1}\text{-}\textsf{Sound\,}(\textsf{PA})=_{df}\textsf{Con}(\textsf{PA}+\Pi_{n-1}\textrm{-Th}(\mathbb{N}))=\textsf{Con}(T_0)$ and $\textsf{PA}\subseteq\mathfrak{T}$, Lemma~\ref{CS012} implies that  $\mathfrak{T}\vdash\Sigma_{n -1}\text{-}\textsf{Sound\,}(\mathfrak{T})$.
\end{proof}

\section{Concluding Remarks}
A special case of G\"odel's second incompleteness theorem for $\Sigma_n$-soundness of ${\sf PA}$ follows from the well-known facts on strong provability predicates and their modal logics (see e.g. \cite{Rppa2005,Ospa1993}) and it could be extended to $\Sigma_n$-definable and explicit (provable) extensions of  ${\sf PA}$. So, no $\Sigma_{n}$-definable, $\Sigma_{n-1}$-sound and explicit extension of ${\sf PA}$ can prove its own $\Sigma_{n-1}$-soundness (Theorem~\ref{CS007}---which generalizes Theorem~6 of \cite{Eldt1975}). We strengthened this result by deleting the requirement of ``explicit extension of $\textsf{PA}$'' (Theorem \ref{CS008}).
 The  optimality of this result, in a sense, follows from the fact that a complete $\Delta_{n+1}$-definable and $\Sigma_{n-1}$-sound theory (which is an explicit extension of ${\sf PA}$) may prove its own $\Sigma_{n-1}$-soundness (Theorem~\ref{CS015}---which generalizes an example  of \cite{Eldt1975} reconstructed in \cite{Elmc2012}).

 \section*{Acknowledgements}
 The first author would like to thank  his supervisor Professor Zhuanghu Liu at Peking University for creating a free environment for study and research, and thanks to Professor Yue Yang at National University of Singapore for leading him to the filed of incompleteness, and also special thanks to Professor Xianghui Shi at Beijing Normal University for teaching him a lot of mathematical logic. This is a part of the Ph.D. thesis of the second author under the supervision of Professor  Saeed Salehi to whom he is most grateful for valuable suggestions and completely rewriting the paper. The author also thanks Professor Ali Enayat (Gothenburg) for completely editing  the paper and fruitful suggestions.

\bibliographystyle{plain}

\end{document}